\numberwithin{equation}{section} \DeclareMathSizes{2}{10}{12}{13}
\newcommand*{\doublerightarrow}[2]{\mathrel{
		\settowidth{\@tempdima}{$\scriptstyle#1$}
		\settowidth{\@tempdimb}{$\scriptstyle#2$}
		\ifdim\@tempdimb>\@tempdima \@tempdima=\@tempdimb\fi
		\mathop{\vcenter{
				\offinterlineskip\ialign{\hbox to\dimexpr\@tempdima+1em{##}\cr
					\rightarrowfill\cr\noalign{\kern.5ex}
					\rightarrowfill\cr}}}\limits^{\!#1}_{\!#2}}}
\newcommand*{\triplerightarrow}[1]{\mathrel{
		\settowidth{\@tempdima}{$\scriptstyle#1$}
		\mathop{\vcenter{
				\offinterlineskip\ialign{\hbox to\dimexpr\@tempdima+1em{##}\cr
					\rightarrowfill\cr\noalign{\kern.5ex}
					\rightarrowfill\cr\noalign{\kern.5ex}
					\rightarrowfill\cr}}}\limits^{\!#1}}}
\newtheorem{thm}{Proposition}[section]
\newtheorem{Thm}[thm]{Theorem}
\newtheorem{lem}[thm]{Lemma}
\newtheorem{defn}[thm]{Definition}
\title{Differential torsion theories on Eilenberg-Moore categories of monads}
\author[ Divya Ahuja]{Divya Ahuja$^{\ast}$}
\author[ Surjeet Kour]{Surjeet Kour}
\address[] {\newline
	Department of Mathematics,
	Indian Institute of Technology Delhi,
	Hauz Khas, New Delhi, 110016, India.}
\email[] {divyaahuja1428@gmail.com, koursurjeet@gmail.com }
\thanks{$^\ast$Corresponding author.}
\keywords{Differential torsion theory, Hereditary torsion theory,  Derivations, Eilenberg-Moore categories}  
\subjclass[2020]{13N15, 16S90, 18C20, 18E40} 
\begin{document}
\begin{abstract}
Let $\mathcal C$ be a Grothendieck category and $U$ be a monad on $\mathcal C$ that is exact and preserves colimits. In this article, we prove that every hereditary torsion theory on the Eilenberg-Moore category of modules over a monad $U$ is differential. Further, if $\delta:U\longrightarrow U$ denotes a derivation on a monad $U$, then we show that every $\delta$-derivation on a $U$-module $M$ extends uniquely to a $\delta$-derivation on the module of quotients of $M$.  
\end{abstract}
\maketitle

	\smallskip
	\hypersetup{linktocpage}
	\section{Introduction}
		Let $R$ be an associative ring with unity and $_{R}Mod$ be the category of unitary left $R$-modules. A map $\delta:R\longrightarrow R$ is said to be a derivation on $R$ if $\delta(a+b) = \delta(a)+\delta(b)$ and $\delta(ab) = \delta(a)b+a\delta(b)$ for all $a,b\in R$. Let $\delta$ be a derivation on $R$ and $M$ be a left $R$-module. Then, a map $d:M\longrightarrow M$ is said to be a $\delta$-derivation on $M$ if $d(m+n) = d(m)+d(n)$ and $d(am) = ad(m)+\delta(a)m$ for all $m, n\in M$ and $a\in R$. Let $\tau=(\mathcal T,\mathcal F)$ be a torsion theory on $_RMod$ and $M_{\tau} \subseteq M$ be the $\tau$-torsion submodule of $M$. Ohatke in \cite{OK} proved that the torsion theory $\tau$ is hereditary if and only if there exists the module of quotients $H_{\tau}(M)$ for every $R$-module $M$. Further, if $\tau$ is hereditary torsion theory, then $H_{\tau}(M)$ can be expressed as the $\tau$-injective envelope of the $\tau$-torsion free module $M/M_{\tau}$ i.e., $H_{\tau}(M) = E_{\tau}(M/M_{\tau})$ (see \cite{PB2}, \cite{Gol}).

		 \smallskip
		 A hereditary torsion theory $\tau$ on $_{R}Mod$ is said to be differential (see \cite{PB}) if for every $M\in$$_{R}Mod$ and every $\delta$-derivation $d$ on $M$, $d(M_{\tau})\subseteq M_{\tau}$. The importance of differential torsion theories lies in their ability to extend every $\delta$-derivation $d$ on $M$ uniquely to a $\delta$-derivation on the module of quotients $H_{\tau}(M)$ of $M$. Golan in \cite{Gol2} proved that any $\delta$-derivation $d$ on $M$ can be extended to a $\delta$-derivation $\bar{d}:H_{\tau}(M)\longrightarrow H_{\tau}(M)$ on  $H_{\tau}(M)$ with respect to any torsion theory $\tau$ on $_RMod$ relative to which $M$ is torsion free. The uniquness of the $\delta$-derivation $\bar{d}$ was established by Bland in \cite[\S2]{PB}. Lomp and van den Berg later proved in \cite{LB} that every hereditary torsion theory on $_RMod$ must be differential.

\smallskip
		In \cite{AB}, Banerjee replaces the associative ring $R$ with a small preadditive category $\mathcal R$ that is equipped with a derivation $\delta$. This follows from Mitchell's idea (see \cite{MB}) that any preadditive category can be seen as a ring with several objects. It has been proved in \cite{AB} that all hereditary torsion theories on the category of right $\mathcal R$-modules must be differential. Further, it is shown that every
		$\delta$-derivation $d$ on an $\mathcal R$-module $\mathscr M$ extends uniquely to a $\delta$-derivation $\bar{d}$ on the module of quotients $H_{\tau}(\mathscr M)$.
		
		\smallskip
		
		In this article, we study the differential torsion theory on the Eilenberg-Moore category of monad. Let $\mathcal C$ be a Grothendieck category i.e., $\mathcal C$ is an abelian category in which colimits exists, filtered colimits are exact, and $\mathcal C$ possesses a set of generators.   
		Consider a monad $(U, \theta, \eta)$ on $\mathcal C$ where $\theta: U\circ U\longrightarrow U$ and $\eta: 1_{\mathcal C}	\longrightarrow U$ are natural transformations satisfying associativity and unit conditions. 
		We start by defining derivation on the monad $U$ as a natural transformation $\delta:U
		\longrightarrow U$ that satisfies the following equality:
	\begin{equation}
		 \theta\circ (1\ast\delta+\delta\ast 1) = \delta\circ \theta
	\end{equation}
	 Let the monad $U$ be exact and preserves colimits. We prove in Proposition \ref{P3.1} that the Eilenberg-Moore category $EM_U$ of modules over the monad $U$ is a Grothendieck category. Further, if $\{G_i\}_{i\in \Lambda}$ is a set of finitely generated projective generators for $\mathcal C$, then $\{UG_i\}_{i\in \Lambda}$ is a set of finitely generated projective generators for $EM_U.$ Further, using \cite[Proposition 3.3]{LLV}, we define a family of Gabriel filters on $EM_U$ and prove that every family of Gabriel filters on $EM_{U}$ is $\delta$-invariant (see, Theorem \ref{T3.4}). 
	 
	 \smallskip
	 Next, we consider a module $(M, f_{M})\in EM_U$. We say that a morphism $D:M\longrightarrow M$ in $\mathcal C$ is a $\delta$-derivation on $M$ if the following equality holds:
	 \begin{equation}
	 	f_{M}\circ (UD+\delta_{M}) = D\circ f_{M}
	 \end{equation}
	 Let $\tau=(\mathcal T, \mathcal F)$ be a hereditary torsion theory on $EM_U$. By \cite[Corollary 3.4]{LLV}, there exists a bijective correspondence between hereditary torsion theories and families of Gabriel filters on $EM_U$. Then, using Theorem \ref{T3.4}, we prove the first main result of our paper which shows that every hereditary torsion theory on $EM_{U}$ is differential (see Theorem \ref{T3.7}). 
	 
	 \smallskip
	 In section 4, we consider the module of quotients $H_{\tau}(M)$ of $M$ with respect to the hereditary torsion theory $\tau$ on $EM_U$. In accordance with the definition given by Banerjee in \cite[Lemma 2.10]{AB}, we define the $\delta$-derivation $\bar{D}:H_{\tau}(M)\longrightarrow H_{\tau}(M)$ on $H_{\tau}(M)$. Finally in Theorem \ref{T4.5}, we prove that for every $\delta$-derivation $D$ on $M$, there exists a unique $\delta$-derivation $\bar{D}$ on $H_{\tau}(M)$ that lifts the $\delta$-derivation $D$.
\section{Preliminaries}
	In this section we recall some definitions and results from \cite{ABKR} and \cite{Mac}. Throughout, we assume that $\mathcal C$ is a Grothendieck category and let $\{G_i\}_{i\in \Lambda}$ be a set of generators for $\mathcal C$.
\subsection{Monad}
A monad $(U,\theta,\eta)$ on a category $\mathcal C$ consists of an endofunctor $U:\mathcal C\longrightarrow \mathcal C$ and two natural transformations $\theta: U\circ U\longrightarrow U$ and $\eta:1_{\mathcal C}\longrightarrow U$ such that for each $M\in \mathcal C$ the following equalities hold:
\begin{equation}\label{eq2.1}
	\theta_{M} \circ \theta_{UM} = \theta_{M}\circ U\theta_{M} \quad \text{and} \quad \theta_{M}\circ U\eta_{M} = 1_{UM} = \theta_{M} \circ \eta_{UM}
\end{equation}
Let $(U,\theta,\eta)$ and $(U',\theta',\eta')$ be two monads on $\mathcal C$. Then, a morphism between $U$ and $U'$ is a natural transformation $\phi: U\longrightarrow U'$ such that for each $M\in\mathcal C,$ we have 
\begin{equation}
	\phi\circ \theta =\theta'\circ (\phi\ast\phi)\quad \text{and} \quad \eta'=\phi\circ \eta\notag
\end{equation}

A module over a monad $(U,\theta,\eta)$ is a pair $(M,f_{M})$, where $M\in\mathcal C$ and $f_{M}:UM\longrightarrow M$ is a morphism in $\mathcal C$ that satisfies
\begin{equation}\label{eq2.2}
	f_M\circ\theta_M= f_M\circ Uf_M\quad \textup{and}\quad f_M\circ\eta_M= 1_M
\end{equation}
Further, a morphism between two $(U,\theta,\eta)$-modules $(M,f_M)$ and $(N,f_{N})$ is a morphism $g:M\longrightarrow N$ in $\mathcal C$ that satisfies
\begin{equation}
	f_{N}\circ Ug= g\circ f_M \notag
\end{equation}
$(U,\theta,\eta)$-modules together with the morphisms as defined above form the Eilenberg-Moore category of monad $(U,\theta,\eta)$, and we denote it by $EM_{U}.$

\smallskip
Given a monad $(U,\theta,\eta)$ and an object $M\in\mathcal C,$ one can observe that $(UM,\theta_{M}:UUM\longrightarrow UM)$ is a $(U,\theta,\eta)$-module. Therefore, there exists a functor $\Phi_{U}:\mathcal C\longrightarrow EM_{U}$ known as the free functor and is defined by setting 
\begin{equation}\label{eq2.3}
	\Phi_{U}(M)=(UM,\theta_{M}) \quad \text{and}\quad \Phi_{U}(f)=Uf
\end{equation}
 for each $M\in\mathcal C$ and $f\in\mathcal C(M,N).$ The functor $\Phi_{U}$ is left adjoint to the forgetful functor $\mathcal F_{U}: EM_{U}\longrightarrow \mathcal C.$ Hence, for each $M\in \mathcal C$ and $N\in EM_U$, we have the following adjunction (see \cite{Mac})
\begin{equation}\label{monadj}
	EM_U(UM,N)\cong \mathcal C(M,N)
\end{equation}
We now recall the following result from \cite{ABKR}.
\begin{Thm}(see \cite[\S 3]{ABKR})\label{T2.1}
 Let $(U,\theta,\eta)$ be an exact monad on $\mathcal C$ that preserves colimits. Then, $EM_U$ is a Grothendieck category. Further, if $G$ is a projective generator for $\mathcal C$, then $UG$ is a projective generator for $EM_U.$
\end{Thm}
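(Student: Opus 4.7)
The plan is to exploit the adjunction \eqref{monadj} between the free functor $\Phi_U$ and the forgetful functor $\mathcal F_U : EM_U \longrightarrow \mathcal C$. Because $U$ preserves colimits, a standard monadicity argument shows that $\mathcal F_U$ creates (and hence preserves) both limits and colimits, so every limit and colimit in $EM_U$ is computed at the level of underlying objects in $\mathcal C$. Together with exactness of $U$, this will reduce each Grothendieck axiom to the corresponding statement in $\mathcal C$.

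First I would check that $EM_U$ is abelian. The zero object of $\mathcal C$ carries a unique $U$-action since $U$ preserves the initial object. For a morphism $g : (M, f_M) \longrightarrow (N, f_N)$ in $EM_U$, the kernel $K \hookrightarrow M$ and cokernel $N \twoheadrightarrow C$ taken in $\mathcal C$ acquire canonical $U$-module structures: exactness of $U$ identifies $UK$ with the kernel of $Ug$, so $f_M$ restricts to a morphism $UK \longrightarrow K$; dually, $U$ preserves the cokernel $UC$, so $f_N$ descends to $UC \longrightarrow C$. The axiom that every monomorphism is a kernel (and every epimorphism a cokernel) is then inherited from $\mathcal C$. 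Exactness of filtered colimits follows similarly: such colimits in $EM_U$ are computed in $\mathcal C$ via $\mathcal F_U$, and since $\mathcal F_U$ is faithful and exact, exactness of a filtered colimit sequence in $EM_U$ reduces to the AB5 property already available in $\mathcal C$.

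For the generator statement, the adjunction yields $EM_U(UG, N) \cong \mathcal C(G, \mathcal F_U N)$ naturally in $N \in EM_U$. Given a pair $p \neq q : N \rightrightarrows N'$ in $EM_U$, we have $\mathcal F_U p \neq \mathcal F_U q$ in $\mathcal C$, so some $h : G \longrightarrow \mathcal F_U N$ separates them; its adjunct $\bar h : UG \longrightarrow N$ then separates $p$ and $q$ in $EM_U$, showing $UG$ is a generator. Projectivity is immediate from the same isomorphism: the representable functor $EM_U(UG, -) \cong \mathcal C(G, -) \circ \mathcal F_U$ is a composite of two exact functors, using exactness of $\mathcal F_U$ established in the abelian step together with projectivity of $G$ in $\mathcal C$. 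The main obstacle is the abelian-category step, where care is needed to verify that kernels, cokernels, images and coimages computed in $\mathcal C$ genuinely lift to $EM_U$; this is precisely where both hypotheses on $U$ (exactness and preservation of colimits) enter essentially.
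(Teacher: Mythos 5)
Your proposal is correct, and it is in fact more self-contained than what the paper does for this statement: the paper records Theorem \ref{T2.1} with a citation to \cite[\S 3]{ABKR} and never reproves the abelian/AB5 part (even its later Proposition \ref{P3.1} quotes \cite[Proposition 3.1]{ABKR} for that portion and only verifies the finitely generated projective generator claims). Where your argument and the paper's overlap, the mechanism is the same: the adjunction $EM_U(UG,N)\cong\mathcal C(G,\mathcal F_U N)$ from (\ref{monadj}), exactness of the forgetful functor because kernels and cokernels in $EM_U$ are computed in $\mathcal C$ (which, as you note, is exactly where exactness and colimit-preservation of $U$ enter), and projectivity of $UG$ from exactness of the composite $\mathcal C(G,-)\circ\mathcal F_U$. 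Your generator argument, separating a pair $p\neq q$ by the adjunct of a separating morphism $h\colon G\to\mathcal F_U N$, is cleaner than the paper's version in Proposition \ref{P3.1}, which runs a contradiction argument through Grothendieck's generator criterion \cite[\S 1.9]{Gro}. Two points are worth making explicit in a full write-up: (i) for the bare assertion that $EM_U$ is Grothendieck you need a generator before the ``Further'' clause; since your separation argument uses only that $G$ generates (projectivity plays no role there), applying it to an arbitrary generator of $\mathcal C$ closes this; (ii) the verification that the adjunct $\bar h=f_N\circ Uh$ separates $p$ and $q$ is the one-liner $p\circ\bar h\circ\eta_G=p\circ h\neq q\circ h=q\circ\bar h\circ\eta_G$, by naturality of $\eta$ and the unit axiom, and the induced actions on kernels, cokernels and colimits must be checked to satisfy the associativity and unit conditions (\ref{eq2.2}) — routine via the universal properties, using that $U\circ U$ also preserves the relevant (co)limits.
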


\smallskip

\subsection{Hereditary torsion theory and Gabriel filters on a Grothendieck category}

		Let $\mathcal D$ be any abelian category. Then, we recall from \cite[$\S$1.1]{BR} that a torsion theory on $\mathcal D$ is a pair $(\mathcal T, \mathcal F)$ of strict and full subcategories such that:
	
\begin{enumerate}
	\item $\mathcal D(X, Y) = 0$ for any $X \in \mathcal T$, $Y \in \mathcal F$.
	\item For any $Z \in \mathcal D,$ there exist a short exact sequence
	\begin{equation*}
		0\longrightarrow Z^{\mathcal T}\longrightarrow Z\longrightarrow Z^{\mathcal F}\longrightarrow 0
	\end{equation*}
	where $Z^{\mathcal T}\in \mathcal T$ and $Z^{\mathcal F}\in \mathcal F$.
\end{enumerate}
Further, $(\mathcal T, \mathcal F)$ is said to be hereditary torsion theory if the torsion class $\mathcal T$ is closed under subobjects. 

\smallskip
Since $\mathcal C$ is a Grothendieck category, we know from \cite{BS} that there exists a bijective correspondence between hereditary torsion theories and idempotent kernel functors in $\mathcal C$ given as follows:

\smallskip
For any hereditary torsion theory $\tau = (\mathcal T, \mathcal F)$ on $\mathcal C$, the idempotent kernel functor $\sigma_{\tau}:\mathcal C\longrightarrow \mathcal C$ corresponding to $\tau$ is defined as
\begin{equation}\label{eq2.5}
	\sigma_{\tau}(M) = \sum_{\substack{N\subseteq M \\ N\in \mathcal T}}N
	\end{equation} 
	for any $M\in\mathcal C$. Conversely, for any idempotent kernel functor $\sigma:\mathcal C\longrightarrow \mathcal C,$ there exists an hereditary torsion theory on $\mathcal C$, which we denote by $\tau^{\sigma} = (\mathcal T^{\sigma}, \mathcal F^{\sigma})$, and is given by
	\begin{equation}\label{eq2.6}
		\mathcal T^{\sigma} = \{M\in\mathcal C~|~\sigma(M) = M\}\quad\textup{and}\quad\mathcal F^{\sigma} = \{M\in\mathcal C~|~\sigma(M) = 0\}
		\end{equation}
	 
Further, we know from \cite[\S3]{LLV} that if $\sigma:\mathcal C\longrightarrow \mathcal C$ is an idempotent kernel functor, then for each $M\in\mathcal C$, the Gabriel filter of subobjects of $M$ relative to $\sigma$, denoted by $\mathcal L_{M}^{\sigma}$, is defined as
\begin{equation}\label{eq2.7}
	N\in \mathcal L_{M}^{\sigma} \iff M/N\in \mathcal T_{\sigma}
\end{equation} 
\begin{thm}(see \cite[Proposition 3.3]{LLV})\label{P2.2}
	Let $\mathcal C$ be a Grothendieck category and $\mathcal G = \{G_i\}_{i\in \Lambda}$ be a system of finitely generated projective generators for $\mathcal C$. Let $\{\mathcal L_{G_i}\}_{i\in\Lambda}$ be a family of filters of subobjects of $G_i$ that satisfies the following conditions:
	\begin{enumerate}
		\item For each $i\in \Lambda$, $\mathcal L_{G_i}\neq \emptyset$. In particular, $G_i\in \mathcal L_{G_{i}}$ for each $i\in \Lambda$.\label{p2.1}
		\item If $I\subseteq J\subseteq G_i$ and $I\in \mathcal L_{G_i}$, then $J\in \mathcal L_{G_i}$.\label{p2.2}
		\item If $f\in \mathcal C(G_j,G_i)$ and $I\in \mathcal L_{G_i}$, then $f^{-1}(I)\in \mathcal L_{G_j}$.\label{p2.3}
		
		\item Let $I\subseteq J\in\mathcal L_{G_i}$ such that $f^{-1}(I)\in \mathcal L_{G_j}$ for every $f\in\mathcal C(G_j, J)$. Then, $I\in\mathcal L_{G_i}$.\label{p2.4}
	\end{enumerate}
	 Then, there exists an idempotent kernel functor $\sigma:\mathcal C\longrightarrow \mathcal C$ such that $\mathcal L_{G_i}^{\sigma}=\mathcal L_{G_i}$ for each $i\in\Lambda$.
\end{thm}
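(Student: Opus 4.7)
The plan is to construct the idempotent kernel functor $\sigma$ explicitly from the data $\{\mathcal L_{G_i}\}_{i\in\Lambda}$ and then verify that the associated filters on the generators recover the prescribed ones. For each $M\in\mathcal C$, I would first extend the given filters to a filter $\mathcal L_M$ of subobjects of $M$ by declaring
\begin{equation*}
N\in\mathcal L_M \iff f^{-1}(N)\in\mathcal L_{G_i}\text{ for every }i\in\Lambda\text{ and every }f\in\mathcal C(G_i,M).
\end{equation*}
Conditions (2) and (3) immediately ensure that $\mathcal L_M$ is upward closed and stable under pullbacks, while (1) gives $M\in\mathcal L_M$. I would then define
\begin{equation*}
\sigma(M)=\sum\{\,\mathrm{Im}(f)\ :\ i\in\Lambda,\ f\in\mathcal C(G_i,M),\ \ker(f)\in\mathcal L_{G_i}\,\}\subseteq M,
\end{equation*}
equivalently the largest subobject $T\subseteq M$ with $0\in\mathcal L_T$. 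Finite generation of the $G_i$'s is what makes this sum actually exhaust the candidate torsion part of $M$.

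Next I would check that $\sigma$ is a left exact subfunctor of $1_{\mathcal C}$. Functoriality and the inclusion $g(\sigma(M))\subseteq \sigma(N)$ for $g\colon M\to N$ follow from condition (3) applied to compositions $G_i\to M\to N$, since such a composition has kernel containing $\ker(G_i\to M)$. Left exactness, namely $\sigma(N)=N\cap\sigma(M)$ for a subobject $N\hookrightarrow M$, reduces to the observation that any morphism $G_i\to M$ factoring through $N$ has the same kernel computed into $N$ or into $M$.

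The crux, and the main obstacle, is idempotence: $\sigma(M/\sigma(M))=0$. Suppose to the contrary that some $\bar f\colon G_j\to M/\sigma(M)$ has nonzero image in $\sigma(M/\sigma(M))$ with $\ker(\bar f)\in\mathcal L_{G_j}$. Projectivity of $G_j$ provides a lift $f\colon G_j\to M$ satisfying $\pi\circ f=\bar f$ for the quotient $\pi$. Setting $J=f^{-1}(\sigma(M))\subseteq G_j$, one has $J=\ker(\pi\circ f)\in\mathcal L_{G_j}$. For any $h\in\mathcal C(G_k,J)$, the composition $f\circ h\colon G_k\to \sigma(M)$ lands in a torsion object, so its kernel $h^{-1}(\ker f)$ lies in $\mathcal L_{G_k}$. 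Condition (4) with $I=\ker(f)\subseteq J$ then forces $\ker(f)\in\mathcal L_{G_j}$, so the image of $f$ lies in $\sigma(M)$ and $\bar f=0$, a contradiction. The delicate auxiliary point here is that $\sigma(M)$ itself lies in the torsion class; this needs a short argument combining condition (3) with the definition of $\sigma$ and the fact that the torsion class is closed under the relevant sums in a Grothendieck category.

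Once idempotence is established, recovering $\mathcal L_{G_i}^\sigma=\mathcal L_{G_i}$ is routine. One direction uses conditions (2) and (3) to see that if $I\in\mathcal L_{G_i}$ then $G_i/I$ is torsion, hence $I\in\mathcal L_{G_i}^\sigma$. The converse applies condition (4) to the quotient map $G_i\to G_i/I$ once $G_i/I$ is known to be torsion. I expect all genuine difficulty to concentrate in the idempotence step, where the Gabriel closure axiom (4) is deployed together with pullbacks along morphisms from the $G_i$'s; the remaining axioms translate transparently into the filter and functoriality properties of $\sigma$.
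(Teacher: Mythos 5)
The paper never proves this statement: it is quoted directly from \cite{LLV}*{Proposition 3.3} and used as a black box, so there is no internal proof to compare yours against. What you have written is in effect a self-contained reproof, and it follows the standard construction: set $\sigma(M)=\sum\{\mathrm{Im}(f): f\in\mathcal C(G_i,M),\ \ker(f)\in\mathcal L_{G_i}\}$, check that $\sigma$ is a left exact subfunctor of the identity, prove the radical property $\sigma(M/\sigma(M))=0$ via projectivity of the $G_i$ together with axiom (4), and then recover $\mathcal L_{G_i}^{\sigma}=\mathcal L_{G_i}$ from (1)--(4). Your outline is correct, and in particular the idempotence step is exactly the right use of (4): with $J=f^{-1}(\sigma(M))=\ker(\bar f)\in\mathcal L_{G_j}$ and $I=\ker(f)\subseteq J$, every $h\in\mathcal C(G_k,J)$ has $h^{-1}(\ker f)=\ker(f\circ h)\in\mathcal L_{G_k}$ because $f(J)\subseteq\sigma(M)$ is torsion, so (4) gives $\ker(f)\in\mathcal L_{G_j}$ and hence $\bar f=0$.

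Two points deserve tightening, neither of which damages the structure. First, the auxiliary fact you flag---that $\sigma(M)$ is itself torsion---requires the filters $\mathcal L_{G_i}$ to be closed under finite intersections, which is not literally among (1)--(4); it follows by noting that for $I,J\in\mathcal L_{G_i}$ and any $f\in\mathcal C(G_j,J)$ one has $f^{-1}(I\cap J)=(\iota\circ f)^{-1}(I)\in\mathcal L_{G_j}$ by (3) (with $\iota:J\hookrightarrow G_i$ the inclusion), so (4) applied to $I\cap J\subseteq J$ gives $I\cap J\in\mathcal L_{G_i}$. With this, each $\mathrm{Im}(f)\cong G_i/\ker(f)$ is torsion by projectivity of the generators plus (2)--(3), finite sums of torsion subobjects are torsion as quotients of finite direct sums, and finite generation of the $G_i$ handles the passage to the directed union defining $\sigma(M)$; it is worth writing this out since both the idempotence step and the inclusion $\mathcal L_{G_i}^{\sigma}\subseteq\mathcal L_{G_i}$ silently rely on it. Second, in the functoriality step the kernel of a composite $G_i\to M\to N$ contains $\ker(G_i\to M)$, so the axiom being used there is the upward closure (2), not (3).
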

Using Proposition \ref{P2.2}, one can define a family of Gabriel filters on a Grothendieck category as follows.
\begin{defn}(see \cite[Proposition 3.2 \& Proposition 3.3]{LLV})\label{D2.3}
	Let $\mathcal C$ be a Grothendieck category and $\mathcal G = \{G_i\}_{i\in \Lambda}$ be a system of finitely generated projective generators for $\mathcal C$. Then, a family of Gabriel filters $\mathcal L=\{\mathcal L_{G_i}\}_{i\in\Lambda}$ is a collection that satisfies the properties (\ref{p2.1}) to (\ref{p2.4}) in Proposition \ref{P2.2}.
\end{defn}
By the above reasoning, it is clear that there exists a bijective correspondence between hereditary torsion theories and families of Gabriel filters $\mathcal L_{G_i}$ on $\mathcal C$ (see \cite[Corollary 3.4]{LLV}).
	\section{Differential torsion theory on Eilenberg-Moore category}
	We continue with $\mathcal C$ being a Grothendieck category and let $\mathcal G=\{G_i\}_{i\in \Lambda}$ be a set of finitely generated projective generators for $\mathcal C$. In this section, we shall prove that if $(U, \theta, \eta)$ is a monad on $\mathcal C$ that is exact and preserves colimits, then the hereditary torsion theories on $EM_{U}$ are differential.

	\begin{thm}\label{P3.1}
		Let $(U, \theta, \eta)$ be a monad on $\mathcal C$ that is exact and preserves colimits. Then, $EM_{U}$ is a Grothendieck category and the collection $\{UG_i\}_{i\in \Lambda}$ gives a set of finitely generated projective generators for $EM_U$.
	\end{thm}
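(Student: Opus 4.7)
The plan is to reduce the proposition to Theorem \ref{T2.1} together with the adjunction \eqref{monadj} and standard properties of the free/forgetful pair $\Phi_U \dashv \mathcal F_U$. The first claim, that $EM_U$ is a Grothendieck category, is essentially a direct citation of Theorem \ref{T2.1}; the only new content is to upgrade the single projective generator to a set of finitely generated projective generators. So the body of the proof will consist of three short verifications.

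First, I would verify that each $UG_i = \Phi_U(G_i)$ is projective in $EM_U$. By \eqref{monadj} we have a natural isomorphism
\begin{equation*}
	EM_U(UG_i, -) \;\cong\; \mathcal C(G_i, \mathcal F_U(-)).
\end{equation*}
Here $\mathcal F_U$ is exact: it is a right adjoint (hence left exact), and because $U$ preserves colimits the forgetful functor preserves colimits as well (this is the standard fact that $\mathcal F_U$ creates every colimit that $U$ preserves), hence is exact. Composing with $\mathcal C(G_i,-)$, which is exact since $G_i$ is projective in $\mathcal C$, shows $EM_U(UG_i,-)$ is exact.

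Second, I would verify that $\{UG_i\}_{i\in\Lambda}$ is a generating set. Given a proper subobject $N'\hookrightarrow N$ in $EM_U$, its image under $\mathcal F_U$ is a proper subobject in $\mathcal C$ (since $\mathcal F_U$ preserves monomorphisms and is faithful), so by the generating property of $\{G_i\}$ there exists some $g:G_i \to \mathcal F_U(N)$ not factoring through $\mathcal F_U(N')$. The adjunction \eqref{monadj} converts $g$ into a morphism $\tilde g:UG_i \to N$ in $EM_U$ that does not factor through $N'$, proving $\{UG_i\}$ generates.

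Third, I would check finite generation of each $UG_i$, using the criterion that $X$ is finitely generated iff $EM_U(X, -)$ preserves the colimit of any directed system of subobjects. For a directed system $\{N_\alpha\}$ of subobjects of some $N \in EM_U$ with union $\varinjlim N_\alpha$, the adjunction gives
\begin{equation*}
	EM_U(UG_i,\, \varinjlim N_\alpha) \;\cong\; \mathcal C(G_i,\, \mathcal F_U(\varinjlim N_\alpha)) \;\cong\; \mathcal C(G_i,\, \varinjlim \mathcal F_U(N_\alpha)) \;\cong\; \varinjlim \mathcal C(G_i, \mathcal F_U(N_\alpha)) \;\cong\; \varinjlim EM_U(UG_i, N_\alpha),
\end{equation*}
where the second isomorphism uses that $\mathcal F_U$ preserves directed colimits (because $U$ does) and the third uses that $G_i$ is finitely generated in $\mathcal C$. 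The main potential obstacle is justifying that $\mathcal F_U$ is well behaved with respect to these colimits and with respect to exactness; this is precisely what the hypotheses ``$U$ is exact and preserves colimits'' buy us, so once these two facts about $\mathcal F_U$ are established at the outset, all three verifications fall into place.
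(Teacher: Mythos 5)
Your proposal is correct and follows essentially the same route as the paper: cite the earlier result for the Grothendieck structure of $EM_U$, then use the adjunction $EM_U(UG_i,-)\cong\mathcal C(G_i,\mathcal F_U(-))$ to transfer projectivity, finite generation, and the generating property from the $G_i$ to the $UG_i$ (your generator step is the contrapositive of the paper's argument by contradiction via the unit $\eta_{G_i}$). If anything, you are more explicit than the paper about the needed facts that $\mathcal F_U$ is exact and preserves directed colimits, and about restricting to directed systems of subobjects in the finite-generation criterion.
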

	\begin{proof}
From \cite[Proposition 3.1]{ABKR}, it is clear that $EM_{U}$ is an abelian category in which colimits exists and filtered colimits are exact. It now remains to show that $\{UG_{i}\}_{i\in \Lambda}$ is a set of finitely generated projective generators for $EM_U$. Let  $\{ M_t\}_{t\in T}$ be any directed system of objects in $EM_U$. Since each $G_i\in \mathcal C$ is finitely generated, we get $$EM_{U}(UG_i, \underset{t\in T}\varinjlim M_t) \cong \mathcal C(G_i, \underset{t\in T}\varinjlim M_t) = \underset{t\in T}\varinjlim \mathcal C(G_i, M_t) \cong \underset{t\in T}\varinjlim EM_{U}(UG_i,  M_t)$$ Hence each $UG_i$ is a fintely generated in $EM_U$. Further, as each $G_i$ is projective, $\mathcal C(G_i, -) \cong EM_{U}(UG_i, -)$ is exact. Hence, each $UG_i$ is projective in $EM_U$.

\smallskip
We now consider a subobject $M'\subsetneq M$ in $EM_U$ and let $\iota:M'\longrightarrow M$ denote a monomorphism in $EM_U$. Let for all $i\in \Lambda$ and for any morphism $h\in EM_{U}(UG_i, M)$, there exists a morphism $g\in EM_U(UG_i, M')$ such that $\iota\circ g=h$. Then, $\iota\circ g\circ \eta_{G_i} = h\circ \eta_{G_i}$ in $\mathcal C(G_i, M)$. Now, consider any morphism $\phi\in \mathcal C(G_i, M)$. By (\ref{monadj}), we have a corresponding morphism $\hat{\phi}\in EM_U(UG_i, M)$ given by $\hat{\phi}:UG_i\xrightarrow{U\phi}UM\xrightarrow{f_M}M.$ Then, it follows from the above that there exists a morphism $\hat{g}\in EM_U(UG_i, M')$ such that $\iota\circ \hat g = \hat{\phi}$ in $EM_U$. Therefore, $\iota\circ \hat g\circ \eta_{G_i} = \hat{\phi}\circ \eta_{G_i}$ in $\mathcal C$. Since the composition $\hat{\phi}\circ \eta_{G_i}:G_i\longrightarrow M$ gives back the morphism $\phi:G_i\longrightarrow M$ in $\mathcal C$, and $\{G_i\}_{i\in \Lambda}$ is a family of generators for $\mathcal C$, we get a contradiction from \cite[\S1.9]{Gro}. Therefore, $\{UG_i~|~i\in \Lambda\}$ forms a set of generators for $EM_{U}$. 
	\end{proof}
	\begin{defn}\label{D3.2}
		Let $(U, \theta, \eta)$ be a monad on $\mathcal C$. Then, we say that a natural transformation $\delta:U\longrightarrow U$ is a derivation on $U$ if the following diagram commutes.
		\begin{equation}\label{eq3.1}
			\begin{CD}
				UU @>1\ast \delta +\delta\ast 1 >> UU \\
				@VV \theta V @VV \theta V \\
				U @>\delta>> U \\
			\end{CD}
		\end{equation} 
	\end{defn}
We now consider a monad $U$ on $\mathcal C$ that is exact and preserves colimits. Then, by Proposition \ref{P3.1}, $EM_U$ is a Grothendieck category. Let $\{UG_i\}_{i\in \Lambda}$ be a set of finitely generated projective generators for $EM_U$. Then, by Definition \ref{D2.3}, a family $\mathcal L = \{\mathcal L_{UG_{i}}\}_{i\in \Lambda}$ of Gabriel filters on $EM_U$ is a collection that satisfies the following conditions:
	\begin{enumerate}
		\item For each $i\in \Lambda$, $\mathcal L_{UG_{i}}\neq \emptyset$. In particular, $UG_i\in \mathcal L_{UG_{i}}$ for each $i\in \Lambda$.\label{P1}
		
		\item If $I\subseteq J\subseteq UG_i$ and $I\in \mathcal L_{UG_{i}}$, then $J\in \mathcal L_{UG_{i}}$.\label{P2}
		
		\item Let $f\in EM_U(UG_j,UG_i)$ and $I\in \mathcal L_{UG_{i}}$. Then, $f^{-1}(I)\in \mathcal L_{UG_{j}}$.\label{P3}
		
		\item Let $I\subseteq J\in\mathcal L_{UG_i}$ such that $f^{-1}(I)\in \mathcal L_{UG_{j}}$ for every $f\in EM_U(UG_j, J)$. Then $I\in\mathcal L_{UG_{i}}$. \label{P4}
	\end{enumerate}

\begin{defn}\label{D3.3}
	Let $(U, \theta, \eta)$ be a monad on $\mathcal C$ that is exact and preserves colimits and let $\delta$ be a derivation on $U$. Let $\mathcal L = \{\mathcal L_{UG_{i}}\}_{i\in \Lambda}$ be a family of Gabriel filters on $EM_{U}$. Then, we say that the family $\mathcal L=\{\mathcal L_{UG_{i}}\}_{i\in \Lambda}$ on $EM_U$  is $\delta$-invariant if for each $i\in\Lambda$ and $I\in \mathcal L_{UG_{i}}$, there exists some $J\in\mathcal L_{UG_{i}}$ such that $\delta_{G_i} (J)\subseteq I.$ 
	\end{defn}
	\textbf{Note:} For any morphism $f\in\mathcal C(M,N)$ and any subobject $L\subseteq M$ in $\mathcal C$, the restriction map of $f$ acting on $L$ is denoted as $f(L)$.
	\begin{Thm}\label{T3.4}
		Let $(U, \theta, \eta)$ be a monad on $\mathcal C$ that is exact and preserves colimits and let $\delta$ be a derivation on $U$. Then, every family $\mathcal L = \{\mathcal L_{UG_{i}}\}_{i\in \Lambda}$ of  Gabriel filters on $EM_{U}$ is $\delta$-invariant.
	\end{Thm}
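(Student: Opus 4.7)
My plan is to produce, for any $i\in\Lambda$ and any $I\in \mathcal L_{UG_i}$, the candidate witness
\[
J \;:=\; I\cap \delta_{G_i}^{-1}(I),
\]
where $\delta_{G_i}^{-1}(I)$ denotes the $\mathcal C$-pullback of the inclusion $I\hookrightarrow UG_i$ along $\delta_{G_i}:UG_i\longrightarrow UG_i$. By construction $J\subseteq I$ and $\delta_{G_i}(J)\subseteq I$, so two things remain to verify: that $J$ is actually a subobject of $UG_i$ in $EM_U$, and that $J\in \mathcal L_{UG_i}$.

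For the first, I would exploit that $U$ is exact (hence preserves finite intersections), giving $UJ = UI\cap U\delta_{G_i}^{-1}(I)$ inside $UUG_i$. Then $\theta_{G_i}(UJ)\subseteq \theta_{G_i}(UI)\subseteq I$, since $I$ is a $U$-submodule. To show $\theta_{G_i}(UJ)\subseteq \delta_{G_i}^{-1}(I)$, I would apply the derivation identity (\ref{eq3.1}) at $G_i$, which rewrites $\delta_{G_i}\circ \theta_{G_i}$ as $\theta_{G_i}\circ U\delta_{G_i} + \theta_{G_i}\circ \delta_{UG_i}$. Evaluated on $UJ$, the first summand lies in $I$ because $\delta_{G_i}(J)\subseteq I$ and $U$ preserves inclusions; the second lies in $I$ because naturality of $\delta$ at the inclusion $J\hookrightarrow UG_i$ forces $\delta_{UG_i}(UJ)\subseteq UJ$, and then the image under $\theta_{G_i}$ is already known to lie in $I$. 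Hence $\theta_{G_i}(UJ)\subseteq I\cap \delta_{G_i}^{-1}(I) = J$, so $J$ is indeed a subobject in $EM_U$.

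For the second, I would apply the transitivity axiom (\ref{P4}) to $J\subseteq I\in \mathcal L_{UG_i}$: given any $f\in EM_U(UG_j, I)$, I need $f^{-1}(J)\in \mathcal L_{UG_j}$. The crucial step is to decompose the $\mathcal C$-morphism $\delta_{G_i}\circ f$ into an honest $EM_U$-morphism plus a harmless remainder. Writing $f = \theta_{G_i}\circ U\phi$ with $\phi := f\circ \eta_{G_j}:G_j\longrightarrow UG_i$ the adjoint of $f$ under (\ref{monadj}), the derivation identity (\ref{eq3.1}) combined with naturality of $\delta$ at $\phi$ gives
\[
\delta_{G_i}\circ f \;=\; g + f\circ \delta_{G_j}, \qquad g \;:=\; \theta_{G_i}\circ U(\delta_{G_i}\circ \phi),
\]
where $g$ is the adjoint under (\ref{monadj}) of $\delta_{G_i}\circ \phi$ and therefore lies in $EM_U(UG_j, UG_i)$. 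Because $f$ lands in $I$, one has $(f\circ \delta_{G_j})^{-1}(I) = UG_j$, so $f^{-1}(J) = (\delta_{G_i}\circ f)^{-1}(I)\supseteq g^{-1}(I)$. Axiom (\ref{P3}) applied to $g$ yields $g^{-1}(I)\in \mathcal L_{UG_j}$, and (\ref{P2}) upgrades this to $f^{-1}(J)\in \mathcal L_{UG_j}$. Axiom (\ref{P4}) then delivers $J\in \mathcal L_{UG_i}$.

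The chief obstacle is precisely the decomposition of $\delta_{G_i}\circ f$ in the second step. The natural transformation $\delta_{G_i}$ is only a $\mathcal C$-morphism, so axiom (\ref{P3}) cannot be applied to it directly. The interplay between the adjunction (\ref{monadj}) and the derivation identity (\ref{eq3.1}) is exactly what lets one extract from $\delta_{G_i}\circ f$ a bona fide $EM_U$-morphism $g$ together with a remainder $f\circ \delta_{G_j}$ that is absorbed by the hypothesis $f\in EM_U(UG_j, I)$. Once this maneuver is in hand, the Gabriel filter axioms close the argument, and the membership of $J$ in the filter follows.
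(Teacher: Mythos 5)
Your proposal is correct, but at the decisive step it does not follow the paper's argument, and it is in fact more careful there. The paper takes its witness $J$ to be the sum of all subobjects $P\subseteq I$ in $EM_U$ with $\delta_{G_i}(P)\subseteq I$ (this coincides with your pullback $I\cap\delta_{G_i}^{-1}(I)$ once one knows that pullback is a $U$-submodule, which is exactly the content of your first step via the derivation identity (\ref{eq3.1}) and naturality of $\delta$ at the inclusion; the paper sidesteps this check by summing only over $EM_U$-subobjects). It then shows $(\delta_{G_i}\circ f)^{-1}(I)\subseteq f^{-1}(J)$ for any $f\in EM_U(UG_j,UG_i)$ with image in $I$, and concludes $(\delta_{G_i}\circ f)^{-1}(I)\in\mathcal L_{UG_j}$ by invoking axiom (\ref{P3}) for the composite $\delta_{G_i}\circ f$ itself. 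Since $\delta_{G_i}$ is only a morphism in $\mathcal C$, this is precisely the point you flag: as literally stated, (\ref{P3}) does not apply to $\delta_{G_i}\circ f$, and one needs a separate reason why the relevant preimage is an $EM_U$-subobject belonging to the filter. Your decomposition $\delta_{G_i}\circ f = g + f\circ\delta_{G_j}$, with $g=\theta_{G_i}\circ U(\delta_{G_i}\circ f\circ\eta_{G_j})$ an honest $EM_U$-morphism obtained from the adjunction (\ref{monadj}), supplies exactly the missing justification: (\ref{P3}) applies to $g$, the remainder $f\circ\delta_{G_j}$ is absorbed because $f$ lands in $I$, and (\ref{P2}) together with (\ref{P4}) finishes as in the paper. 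So the overall scaffolding is the same --- build the largest subobject of $I$ carried into $I$ by $\delta_{G_i}$ and feed it to the transitivity axiom --- but your treatment of the crucial preimage, the categorical analogue of the classical $\delta(ar)=\delta(a)r+a\delta(r)$ manoeuvre used by Lomp--van den Berg and Banerjee, is a genuine refinement: it buys a rigorous application of (\ref{P3}), whereas the paper's shorter route applies that axiom to a map that need not live in $EM_U$.
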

	\begin{proof}
		Let $I\in\mathcal L_{UG_{i}}$ for some $i\in\Lambda$. We set
		\begin{equation}\label{eq3.2}
			J := \underset{P\subseteq |J|}\sum P,~\text{where}~|J| = \{P\subseteq I~|~ \delta_{G_i}(P)\subseteq I~in ~EM_U\}
		\end{equation}
			  From (\ref{eq3.2}), it is clear that $\delta_{G_i}(J)\subseteq I\subseteq UG_i$ in $EM_U$. It now remains to show that $J\in\mathcal L_{UG_{i}}$. We consider a morphism $f\in EM_U(UG_j,UG_i)$ such that $Im(f)\subseteq I$. From property (\ref{P4}), it is enough to show that $f^{-1}(J)\in \mathcal L_{UG_{j}}$. For this, we shall first show that $(\delta_{G_i}\circ f)^{-1}(I)\subseteq f^{-1}(J)$. Let $N\subseteq (\delta_{G_i}\circ f)^{-1}(I)$ be any arbitrary subobject in $EM_U$. Then, $(\delta_{G_i}\circ f)(N) = \delta_{G_i}
			  (f(N))\subseteq I$. From (\ref{eq3.2}), we get that $f(N)\subseteq J$. Therefore, $N\subseteq f^{-1}(J)$. Hence $(\delta_{G_i}\circ f)^{-1}(I)\subseteq f^{-1}(J)$. Further, from property (\ref{P3}), we get $(\delta_{G_i}\circ f)^{-1}(I)\in \mathcal L_{UG_{j}}$. The result now follows from property (\ref{P2}).
	\end{proof}
Let $\mathcal L = \{\mathcal L_{UG_{i}}\}_{i\in \Lambda}$ be a family of Gabriel filters on $EM_{U}$. Then, by \cite[Theorem 2.1]{GG}, the corresponding hereditary torsion class $\mathcal T$ is defined as follows: 
	\begin{equation}\label{eq3.3}
		Ob(\mathcal T):=\{M\in EM_{U}~|~Ker(f:UG_i\longrightarrow M)\in \mathcal L_{UG_{i}},~~\text{for all}~~ f\in EM_{U}(UG_i, M)~~\text{and}~~\text{for all}~~i\in \Lambda\}
	\end{equation}
	Further, for any hereditary torsion theory $\tau = (\mathcal T, \mathcal F)$ and any object $M\in EM_{U}$, the $\tau$-torsion subobject of $M$ is given by
	\begin{equation}\label{eq3.4}
		M_{\tau} := \underset{N\subseteq |M_{\tau}|}\sum N
	\end{equation}
	where  $|M_{\tau}| = \{N\subseteq M~|~ Ker(f:UG_i\longrightarrow N) \in \mathcal L_{UG_{i}},~ \textup{for all} ~f\in EM_{U}(UG_i,N)~ \text{and for all} ~i\in \Lambda\}$. 
	
	\smallskip
	Next, for a given monad $U$ that is equipped with a derivation $\delta$, we define the $\delta$-derivation on any object $M$ in $EM_{U}$.
	\begin{defn}\label{D3.5}
 Let $(U, \theta, \eta)$ be a monad on $\mathcal C$ that is equipped with a derivation $\delta$ and let $M$ be an object in $EM_{U}$. Then, we say that a morphism $D:M\longrightarrow M$ in $\mathcal C$ is a $\delta$-derivation on $M$ if the following diagram commutes.	
\begin{equation}\label{eq3.5}
			\begin{CD}
				UM @>UD+\delta_{M} >> UM \\
				 @VV f_{M}V @VV f_{M}V \\
				M @>D>> M \\
			\end{CD}
		\end{equation} 
	\end{defn}
\begin{defn}\label{D3.6}
Let $\tau = (\mathcal T, \mathcal F)$ be a hereditary torsion theory on $EM_{U}$. Then, we say that $\tau = (\mathcal T, \mathcal F)$ is differential if for any object $M\in EM_{U}$ and any $\delta$-derivation $D$ on $M$, $D(M_{\tau})\subseteq M_{\tau}$.			\end{defn}
\begin{Thm}\label{T3.7}
Let $U$ be a monad on $\mathcal C$ that is exact and preserves colimits and let $\delta$ be a derivation on $U$. Then, every hereditary torsion theory on $EM_{U}$ is differential.		
\end{Thm}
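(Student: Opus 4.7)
The overall plan is to show, for every morphism $\psi: G_i \to M$ in $\mathcal C$ whose image is contained in $M_{\tau}$, that the composite $D \circ \psi$ also has image in $M_{\tau}$. Summing over all such $\psi$ and all $i$ and using that $\{G_i\}_{i\in\Lambda}$ generates $\mathcal C$ then yields $D(M_{\tau}) \subseteq M_{\tau}$, which is exactly Definition \ref{D3.6}.

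Fix such a $\psi$, and let $\hat{\psi} = f_M \circ U\psi \in EM_U(UG_i, M)$ denote its adjoint under (\ref{monadj}). Since the inclusion $\iota: M_{\tau} \hookrightarrow M$ is a morphism in $EM_U$, $\hat{\psi}$ factors through $M_{\tau}$; and because $M_{\tau}$ is a sum of torsion subobjects and $\mathcal T$ is closed under colimits, $M_{\tau}$ itself lies in $\mathcal T$, so the kernel $I := \ker \hat{\psi}$ belongs to $\mathcal L_{UG_i}$ by (\ref{eq3.3}). The crux of the argument is the identity
\[
  \widehat{D \circ \psi} \;=\; D \circ \hat{\psi} \;-\; \hat{\psi} \circ \delta_{G_i} \qquad \text{in } \mathcal C(UG_i, M),
\]
which I would verify by a short computation using (\ref{eq3.5}) in the form $f_M \circ UD = D \circ f_M - f_M \circ \delta_M$ together with the naturality $\delta_M \circ U\psi = U\psi \circ \delta_{G_i}$. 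This is the categorical avatar of the classical Leibniz rearrangement $a\, d(m) = d(am) - \delta(a)m$.

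Applying Theorem \ref{T3.4} to $I$ produces some $J \in \mathcal L_{UG_i}$ with $\delta_{G_i}(J) \subseteq I$. A short standard check using axioms (\ref{P3}) and (\ref{P4}) shows that the family $\{\mathcal L_{UG_j}\}_{j\in\Lambda}$ is closed under finite intersections, so $K := I \cap J$ also lies in $\mathcal L_{UG_i}$. Restricting the identity above along $K \hookrightarrow UG_i$: the first term vanishes because $K \subseteq I = \ker \hat{\psi}$, and the second vanishes because $\delta_{G_i}(K) \subseteq \delta_{G_i}(J) \subseteq I$. Hence $K \subseteq \ker \widehat{D \circ \psi}$, so $\widehat{D \circ \psi}$ factors through the quotient $UG_i / K$, which belongs to $\mathcal T$ since $K \in \mathcal L_{UG_i}$. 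The image of $\widehat{D \circ \psi}$ in $EM_U$ is therefore a torsion sub-$U$-module of $M$, and hence is contained in $M_{\tau}$. Since this image contains the image of $D \circ \psi = \widehat{D \circ \psi} \circ \eta_{G_i}$ in $\mathcal C$, we obtain $D(\psi(G_i)) \subseteq M_{\tau}$, completing the per-generator step.

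The main technical point is bridging between $D$, which a priori is only a $\mathcal C$-morphism, and the Gabriel filter data on $EM_U$; the adjoint identity displayed above accomplishes precisely this by rewriting the non-linear operator $D$, restricted to the image of a free generator, as a difference of morphisms whose kernels are controlled by the $\delta$-invariant filter of Theorem \ref{T3.4}. A secondary detail is that the forgetful functor $EM_U \to \mathcal C$ must preserve kernels and images so that the restrictions and images computed in $EM_U$ agree with those in $\mathcal C$; this is ensured by the hypothesis that $U$ is exact and colimit-preserving.
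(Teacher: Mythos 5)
Your proposal is correct, and it rests on the same two ingredients as the paper's proof: the Leibniz identity $f_M\circ UD = D\circ f_M - f_M\circ \delta_M$ (in your adjoint form $\widehat{D\circ\psi} = D\circ\hat{\psi} - \hat{\psi}\circ\delta_{G_i}$) and the $\delta$-invariance of the Gabriel filter from Theorem \ref{T3.4}, applied after intersecting two filter elements so that both terms vanish. The packaging, however, is genuinely different. The paper fixes a torsion subobject $N\in|M_{\tau}|$ and verifies the membership criterion (\ref{eq3.4}) for its image $DN$ directly: it takes an arbitrary test morphism $g\in EM_U(UG_i,DN)$, uses projectivity of $G_i$ to lift $g\circ\eta_{G_i}$ through the epimorphism $\tilde{D}\colon N\to DN$, passes to the adjoint $h\colon UG_i\to N$ whose kernel $K$ lies in the filter, and shows $g$ kills $K\cap J$, whence $Ker(g)\in\mathcal L_{UG_i}$ by (\ref{P2}). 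You instead cover $M_{\tau}$ by images of $\mathcal C$-morphisms $\psi\colon G_i\to M$ landing in $M_{\tau}$, show that $Ker(\widehat{D\circ\psi})$ contains a filter element, conclude via (\ref{eq2.7}) and (\ref{eq3.3}) that the image of $\widehat{D\circ\psi}$ is a torsion subobject and hence contained in $M_{\tau}$, and then sum. Your route avoids the projectivity lifting and the epi--mono factorization of $D$ restricted to $N$ altogether, at the cost of invoking a few standard facts the paper leaves implicit: that $M_{\tau}$ itself lies in $\mathcal T$ (so that $Ker(\hat{\psi})\in\mathcal L_{UG_i}$), that $M_{\tau}$ is the largest torsion subobject, that $D$ commutes with sums of subobjects, and that the filters are closed under finite intersections; note that the paper's own proof also needs $K\cap J\in\mathcal L_{UG_i}$ without verifying it, so this last point is not a deficit relative to the paper. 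Both arguments also rely, as you observe, on the exactness of the forgetful functor so that kernels and images computed in $EM_U$ and in $\mathcal C$ agree.
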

\begin{proof}
Let $\tau=(\mathcal T, \mathcal F)$ be a hereditary torsion theory on $EM_U$ and let $\mathcal L = \{\mathcal L_{UG_{i}}\}_{G_i\in \mathcal C}$ be a family of Gabriel filters corresponding to $\tau$. Consider an object $M\in EM_{U}$ and let $D:M\longrightarrow M$ be a $\delta$-derivation on $M$. To prove that $D(M_{\tau})\subseteq M_{\tau}$, it is enough to show that for any $N\overset{i_1}\hookrightarrow M$ such that $N\in |M_{\tau}|$, $DN \in |M_{\tau}|$. We start with a morphism  $g\in EM_{U}(UG_i, DN)$. Then, by (\ref{monadj}), there exists a corresponding morphism  $\hat{g}\in \mathcal C(G_i, DN)$ given by $\hat{g}:G_i\xrightarrow{\eta_{G_i}} UG_i\xrightarrow{g}DN$. Let $\tilde{D}:N\longrightarrow DN$ denote an epimorphism in $\mathcal C$ and $i_2:DN\hookrightarrow M$ denote a monomorphism in $EM_U$. Then, $i_2\circ \tilde{D} = D\circ i_1$. Since $G_i$ is projective, there exists a morphism $\hat{h}\in \mathcal C(G_i, N)$ such that $\tilde{D}\circ \hat{h} = \hat{g} = g\circ \eta_{G_i}.$ Let $h:UG_i\xrightarrow{U\hat{h}} UN\xrightarrow{f_{N}} N$ be the corresponding morphism in $EM_{U}$. Set $K = Ker(h)$. Then, $K\in \mathcal L_{UG_{i}}$. We know from Theorem \ref{T3.4} that every family of Gabriel filters is $\delta$-invariant. Hence, there exists some $J \in \mathcal{L}_{UG_{i}}$ such that $\delta_{G_i}(J) \subseteq K$. Set $I = K\cap J$. 

\smallskip
We now consider the following commutative diagram in $EM_{U}$.
\begin{equation*}
	\begin{CD}
		UG_i @>Ui_1\circ U\hat{h} >> UM @>UD+ \delta_{M} >> UM \\
		@. @VV f_{M}V @VV f_{M}V \\
		@.M @>D>> M \\
	\end{CD}
\end{equation*} 
 Then,
\begin{equation}\label{eq3.6}
	D(f_{M}(Ui_1(U\hat{h}(I))))= f_{M}(UD(Ui_1(U\hat{h}(I)))) + f_{M}( \delta_{M}(Ui_1(U\hat{h}(I))))
\end{equation} 
Since, $f_M\circ Ui_1=i_1\circ f_N$ and $f_N\circ U\hat{h}=h$, $$D(f_{M}(Ui_1(U\hat{h}(I))))=D(i_1(f_{N}( U\hat{h}(I))))=D( i_1(h(I))) = 0$$ Further, as $\delta_N\circ U\hat{h} = U\hat{h}\circ \delta_{G_i}$, we obtain
\begin{align*} 
	f_{M}(\delta_{M}( Ui_1( U\hat{h}(I))))&=f_{M}(Ui_1( \delta_{N}( U\hat{h}(I))))\\\notag &=f_{M}(Ui_1(U\hat{h}( \delta_{G_i}(I))))\\\notag & =i_1(f_{N}( U\hat{h}( \delta_{G_i}(I))))\\\notag &= i_1( h(\delta_{G_i}(I)))
\end{align*}
 Since $\delta_{G_i}(I)\subseteq I\subseteq K$, $f_{M}(\delta_{M}( Ui_1( U\hat{h}(I))))= i_1(h(\delta_{G_i}(I))) = 0.$ This gives $f_{M}(UD( Ui_1(U\hat{h}(I)))) = 0$. Further, as $i_2\circ \tilde{D} = D\circ i_1$ and $\tilde{D}\circ \hat{h} = \hat{g}$, we get
\begin{align*}
	0=f_{M}(UD( Ui_1(U\hat{h}(I))))&=f_{M}(Ui_2( U\tilde{D}( U\hat{h}(I)))) \\\notag&= f_{M}(Ui_2( U\hat{g}(I)))\\\notag&=f_{M}(Ui_2(Ug (U\eta_{G_i}(I))))\\\notag &= i_2(g( \theta_{G_i}( U\eta_{G_i}(I))))
\end{align*}  
Since $\theta_{G_i} \circ U\eta_{G_i} = 1_{UG_i}$, $f_{M}(UD( Ui_1(U\hat{h}(I)))) = i_2( g(I))=0.$ As $i_2$ is a monomorphism, it follows that $g(I)=0$. Finally, by property (\ref{P2}), $Ker(g)\in \mathcal L_{UG_{i}}$. Hence $DN\in |M_{\tau}|$. 
\end{proof}

\section{On module of quotients}
Let $(U, \theta, \eta)$ be a monad on $\mathcal C$ that is exact and preserves colimits and let $\tau = (\mathcal T, \mathcal F)$ be a hereditary torsion theory on $EM_{U}$. Let $\mathcal L = \{\mathcal L_{UG_{i}}\}_{G_i\in \mathcal C}$ be the family of Gabriel filters corresponding to $\tau$. Consider an object $M$ in $EM_{U}$ and let $M_{\tau}$ denote the torsion submodule of $M$. We know from \cite[\S 2.2]{GG} that there exists a functor $H_{\tau} : EM_{U}\longrightarrow EM_{U}$, $M\mapsto H_{\tau}(M)$  defined as
\begin{equation}\label{eq4.1}
	H_{\tau}(M)(UG_i) = \underset{I\in \mathcal L_{UG_{i}}} \varinjlim EM_{U}(I, M/M_{\tau})
	\end{equation} 
 $H_{\tau}(M)$ is called as the module of quotients of $M$. It is clear from \cite[\S 2.2]{GG} that $H_{\tau}$ is a left exact functor and for any $\tau$-torsion object $M\in EM_U$, $H_{\tau}(M) = 0$.

\smallskip
Let $\zeta_i:EM_{U}(UG_i, M/M_{\tau})\longrightarrow \varinjlim EM_{U}(I, M/M_{\tau})$ be the canonical morphism. Then, for any morphism $f\in EM_{U}(UG_i, M),$ there exists a morphism $\Phi_{M}:M\longrightarrow H_{\tau}(M)$ in $EM_{U}$ given by (see \cite[\S 2.2]{GG})
\begin{equation}\label{eq4.2}
	\Phi_{M}(UG_i)(f)=\zeta_{i}(p\circ f)
\end{equation}
 where $p:M\longrightarrow  M/M_{\tau}$ denote the canonical epimorphism in $EM_{U}$. 
Further, it is clear from the construction of the map $\Phi_{M}$ that for any morphism $h:UG_j\longrightarrow UG_i$ in $EM_{U}$, there exists a morphism $H_{\tau}(M)(h):H_{\tau}(M)(UG_i)\longrightarrow H_{\tau}(M)(UG_j)$ such that the following diagram commutes.
 	\begin{equation}\label{eq4.3}
 	\begin{CD}
 		EM_{U}(UG_i, M) @>\Phi_{M}(UG_i)(h) >> H_{\tau}(M)(UG_i) \\
 		@VV EM_{U}(-, M)(h)V @VV H_{\tau}(M)(h) V \\
 		EM_{U}(UG_j, M) @>\Phi_{M}(UG_i)(h)>> H_{\tau}(M)(UG_j)\\
 	\end{CD}
 \end{equation} 
 Therefore, $\Phi_{M}$ is functorial in $M$. Furthermore, both $Ker(\Phi_{M})$ and $Coker(\Phi_{M})$ are $\tau$-torsion modules for any object $M\in EM_U$ (see \cite[Proposition 2.4]{GG}).
 
 \smallskip
 In a manner similar to (see \cite[Lemma 2.10]{AB}), we now define the $\delta$-derivation on the module of quotients $H_{\tau}(M)$ for a torsion free module $M\in EM_U$.
 \begin{lem}\label{L4.1}
 	Let $M$ be a torsion free module in $EM_{U}$ equipped with a $\delta$-derivation $D$ and let $f\in H_{\tau}(M)(UG_i)$. Then, $f$ represents a morphism $f:I\longrightarrow M$ for some $I\in \mathcal L_{UG_{i}}$. Let $J\in \mathcal L_{UG_{i}}$ be such that $\delta_{G_i}(J)\subseteq I$. Set $K=I\cap J$. Then, the map $\bar{D}f:K\longrightarrow M$ defined by setting $$\bar{D}f(K)= D(f(K)) - f(\delta_{G_i}(K))$$ is a morphism in $H_{\tau}(M)(UG_i)$.
 \end{lem}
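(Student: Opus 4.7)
The plan is to check three separate claims: that $K=I\cap J$ lies in $\mathcal L_{UG_i}$; that the stated formula makes sense as a $\mathcal C$-morphism $K\to M$; and that this morphism is actually in $EM_U$. The final assertion that $\bar{D}f$ represents a well-defined element of $H_\tau(M)(UG_i)$ then follows from passing to common refinements in the filter.

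First, to establish $K\in\mathcal L_{UG_i}$, I would use properties (\ref{P3}) and (\ref{P4}) to show that $\mathcal L_{UG_i}$ is closed under finite intersections. Indeed, $I\cap J\subseteq J\in\mathcal L_{UG_i}$, and for any $g\in EM_U(UG_j,J)$, composing with the inclusion $\iota_J:J\hookrightarrow UG_i$ and applying (\ref{P3}) to $\iota_J\circ g\in EM_U(UG_j,UG_i)$ and $I\in\mathcal L_{UG_i}$ yields $(\iota_J\circ g)^{-1}(I)=g^{-1}(I\cap J)\in\mathcal L_{UG_j}$. Property (\ref{P4}) then gives $K\in\mathcal L_{UG_i}$.

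Next, to make sense of $\bar{D}f$: since $f|_K:K\to M$ is the restriction of $f:I\to M$, the composite $D\circ f|_K:K\to M$ is a morphism in $\mathcal C$. For the second term, since $K\subseteq J$ and $\delta_{G_i}(J)\subseteq I$, the image of $\delta_{G_i}\circ\iota_K:K\to UG_i$ (where $\iota_K:K\hookrightarrow UG_i$ is the inclusion) lies inside $I$, so in the abelian category $\mathcal C$ it factors uniquely as $\iota_I\circ\delta'$ for some $\mathcal C$-morphism $\delta':K\to I$. This lets us form $f\circ\delta':K\to M$, and $\bar{D}f:=D\circ f|_K-f\circ\delta'$ is then a $\mathcal C$-morphism $K\to M$.

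The main step is to verify that $\bar{D}f$ lies in $EM_U(K,M)$, i.e.\ that $\bar{D}f\circ\mu_K=\mu_M\circ U\bar{D}f$ where $\mu$ denotes the structure maps. Expanding both sides, I would combine: (i) the fact that $f|_K$ and $f$ are $EM_U$-morphisms, giving $f|_K\circ\mu_K=\mu_M\circ Uf|_K$ and $f\circ\mu_I=\mu_M\circ Uf$; (ii) the $\delta$-derivation identity for $D$ from Definition~\ref{D3.5}, namely $\mu_M\circ UD=D\circ\mu_M-\mu_M\circ\delta_M$; (iii) naturality of $\delta$, giving $\delta_M\circ Uf|_K=Uf|_K\circ\delta_K$; and (iv) the monad-derivation identity $\delta_{G_i}\circ\theta_{G_i}=\theta_{G_i}\circ(U\delta_{G_i}+\delta_{UG_i})$ from Definition~\ref{D3.2}. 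After substitution the verification reduces---upon postcomposing with the monic $\iota_I$ and using $\iota_K\circ\mu_K=\theta_{G_i}\circ U\iota_K$---precisely to identity (iv) precomposed with $U\iota_K$, and the two sides match.

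Finally, independence of the choice of $J$ (and of the representative $f:I\to M$) follows by passing to a common refinement in $\mathcal L_{UG_i}$: the two candidate morphisms restrict to the same $\mathcal C$-morphism there, and therefore define the same element of the directed colimit $H_\tau(M)(UG_i)$. The main obstacle is the bookkeeping in the third step: because $\delta_{G_i}$ and $\delta'$ are only $\mathcal C$-morphisms and not $EM_U$-morphisms, the $EM_U$-compatibility of $\bar{D}f$ cannot be read off from the two summands separately, and instead requires the simultaneous use of the derivation identities for $D$ and for $\delta$, linked by the naturality of $\delta$.
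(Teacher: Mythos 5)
Your proposal is correct, and it relies on the same three ingredients as the paper's proof (the derivation identity of Definition \ref{D3.5} for $D$, naturality of $\delta$, and the monad-derivation identity of Definition \ref{D3.2}), but it is organized differently in one genuine respect. The paper invokes $\delta$-invariance (Theorem \ref{T3.4}) a second time to choose $K'\in\mathcal L_{UG_i}$ with $\delta_{G_i}(K')\subseteq K$, and then verifies the module-compatibility only after restricting along $K'\hookrightarrow K$, so the element it actually exhibits in $H_{\tau}(M)(UG_i)$ is the restriction of $\bar{D}f$ to $K'$. You instead factor $\delta_{G_i}\circ\iota_K$ through the mono $\iota_I$ (possible since $K\subseteq J$ and $\delta_{G_i}(J)\subseteq I$), write $\bar{D}f=D\circ f|_K-f\circ\delta'$, and check $\bar{D}f\circ f_K=f_M\circ U(\bar{D}f)$ on all of $K$; after cancelling the common term $D\circ f|_K\circ f_K$, both sides are $f$ applied to morphisms $UK\to I$, and the equality of those $I$-valued morphisms, tested after the mono $\iota_I$, is exactly $\delta_{G_i}\circ\theta_{G_i}=\theta_{G_i}\circ(U\delta_{G_i}+\delta_{UG_i})$ precomposed with $U\iota_K$, using naturality of $\delta$ at $\iota_K$ as well as at $f|_K$. (Only your phrasing needs care: one cannot postcompose the $M$-valued identity with $\iota_I$; one checks the $I$-valued identity after $\iota_I$ and then applies $f$.) This route gives the slightly stronger and cleaner conclusion that $\bar{D}f$ itself is already an $EM_U$-morphism on $K$, showing the second appeal to Theorem \ref{T3.4} in the paper's proof is avoidable. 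Your preliminary steps --- closure of $\mathcal L_{UG_i}$ under finite intersections via properties (\ref{P3}) and (\ref{P4}) so that $K\in\mathcal L_{UG_i}$, and the remark on independence of the choices of $I$ and $J$ in the colimit --- are correct and make explicit two points the paper leaves implicit.
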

 
 \begin{proof}
 	Clearly, $\bar{D}f(K)\subseteq M$. Since $K\in \mathcal L_{UG_i}$, there exists an object $K'\in \mathcal L_{UG_i}$ such that $\delta_{G_i}(K')\subseteq K$. To show that $\bar{D}f$ is a morphism in $EM_U$, it is enough to show that the following diagram commutes.
 	\begin{equation*}
 		\begin{CD}
 		UK'@>Ui	>>UK @>U(\bar{D}f) >> UM \\
 		@VV f_{K'}V	@VV f_{k}V @VV f_{M}V \\
 		K'@> i >>K @>\bar{D}f>> M \\
 		\end{CD}
 	\end{equation*} 
 	It is already clear that the first square commutes in $EM_U$.
 	Further, we note that 
 	\begin{align}\label{eq4.4}
 		\bar{D}f(i(f_{K'}(UK')))= D(f(i(f_{K'}(UK'))))-f(\delta_{G_i}(i(f_{K'}(UK'))))
 	\end{align}
 	On the other hand we have
 	\begin{align}\label{eq4.5}
 		f_{M}(U(\bar{D}f)(Ui(UK'))) &= f_{M}(U(\bar{D}f(i(K'))))\\\notag&= f_{M}(U(D(f(i(K'))-f(\delta_{G_i}(i(K'))))) \\\notag&= f_{M}( UD(Uf(Ui(UK'))))-f_{M}(Uf(U\delta_{G_i}(Ui(UK'))))
 	\end{align}
 	Since $K\subseteq I$ and $f:I\longrightarrow M$, we have the following commutative diagram in $EM_U$, where we continue to denote the restriction map $f|_{K}:K\longrightarrow M$ by $f$.
 	\begin{equation*}
 		\begin{CD}
 		UK'@>Ui>>UK @>Uf >> UM @>UD+\delta_{M} >> UM \\
 		@VV f_{K'}V	@VV f_{K}V @VV f_{M}V @VV f_{M}V \\
 		K'@> i>>K@>f>>M @>D>> M \\
 		\end{CD}
 	\end{equation*} 
 	Then $f_{M}(UD(Uf(Ui(UK')))) + f_{M}(\delta_{M}(Uf(Ui(UK'))))  = D(f(i(f_{K'}(UK'))).$ Accordingly, we get
 	\begin{equation}\label{eq4.6}
 		f_{M}(U(\bar{D}f)(Ui(UK'))) =  D(f(i(f_{K'}(UK')))) - f_{M}(\delta_{M}(Uf(Ui(UK')))) - f_{M}(Uf(U\delta_{G_i}(Ui(UK'))))
 	\end{equation}
 	Therefore, from (\ref{eq4.4}) and (\ref{eq4.6}), it is sufficient to show that \begin{align}\label{eq4.7}
 		f_{M}(\delta_{M}(Uf(Ui(UK')))) +f_{M}(Uf(U\delta_{G_i}(Ui(UK')))) = f(\delta_{G_i}(i(f_{K'}(UK'))))
 	\end{align}
 As $f_K$ and $\delta_K$ are restrictions of $\theta_{G_i}$ and $\delta_{UG_i}$, we obtain
 	 \begin{align*}
 		\delta_{G_i}(i(f_{K'}(UK')))&=\delta_{G_i}(f_{K}(Ui(UK')))\\\notag &=\delta_{G_i}(\theta_{G_i}(Ui(UK'))) \notag\\&= \theta_{G_i}((1 \ast \delta_{G_i}+\delta_{G_i}\ast 1)(Ui(UK')))\notag\\& = \theta_{G_i}( (U\delta_{G_i}+\delta_{UG_i})(Ui(UK')))\notag\\&=f_K((U\delta_{G_i}+\delta_{UG_i})(Ui(UK')))
 	\end{align*}
 	where the last equality follows from the fact that $\delta_{G_i}(K')\subseteq K$.  
 	Further, $\delta_M\circ Uf = Uf\circ \delta_K$ and $f_M\circ Uf = f\circ f_K$ gives
 	\begin{align*}
 			f_{M}(\delta_{M}(Uf(Ui(UK')))) +f_{M}(Uf(U\delta_{G_i}(Ui(UK')))) &=f_{M}(Uf(\delta_{K}(Ui(UK'))))+f_{M}(Uf( U\delta_{G_i}(Ui(UK'))))\notag\\&=f_{M}(Uf(\delta_{UG_i}(Ui(UK'))))+f_{M}(Uf( U\delta_{G_i}(Ui(UK'))))\notag\\
 		&=f(f_{K}(\delta_{UG_i}(Ui(UK'))))+ f(f_{K}( U\delta_{G_i}(Ui(UK'))))\\\notag&=f(\delta_{G_i}(i(f_{K'}(UK'))))
 	\end{align*}
 \end{proof}
By Lemma \ref{L4.1}, there exists a morphism $\bar{D}:H_{\tau}(M)\longrightarrow H_{\tau}(M)$ in $\mathcal C$ defined as 
\begin{equation}\label{eq4.8}
	\bar{D}(UG_i):H_{\tau}(M)(UG_i)\longrightarrow H_{\tau}(M)(UG_i), \quad f\mapsto \bar{D}f
\end{equation}
	for $M\in EM_U$ and $f\in H_{\tau}(M)(UG_i)$. 
\begin{lem}\label{L4.2}
Let $M$ be a torsion free module in $EM_{U}$ equipped with a $\delta$-derivation $D$. Let $\bar{D}:H_{\tau}(M)\longrightarrow H_{\tau}(M)$ be a morphism as defined in (\ref{eq4.8}). Then, the following diagram commutes.
	\begin{equation}\label{eq4.9}
	\begin{CD}
	 M @>\Phi_{M} >> H_{\tau}(M) \\
		@VV D V @VV \bar{D} V \\
		M @>\Phi_{M}>> H_{\tau}(M)\\
	\end{CD}
\end{equation} 
\end{lem}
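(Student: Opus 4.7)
My plan is to verify the diagram (\ref{eq4.9}) commutes in $\mathcal C$ by testing equality against the set of generators $\{G_i\}_{i\in\Lambda}$ of $\mathcal C$. Fix $i\in\Lambda$ and an arbitrary $g\in\mathcal C(G_i,M)$; it is enough to show $\bar{D}\circ\Phi_M\circ g=\Phi_M\circ D\circ g$ in $\mathcal C(G_i,H_\tau(M))$. Let $\hat{g}:=f_M\circ Ug\in EM_U(UG_i,M)$ be the adjoint of $g$ under (\ref{monadj}), so $g=\hat{g}\circ\eta_{G_i}$, and analogously let $\widehat{Dg}:=f_M\circ U(D\circ g)\in EM_U(UG_i,M)$ be the adjoint of $D\circ g$. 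Because $\Phi_M$ is a morphism in $EM_U$, the adjunction is natural in the target and gives $\Phi_M\circ D\circ g=(\Phi_M\circ\widehat{Dg})\circ\eta_{G_i}$, while trivially $\bar{D}\circ\Phi_M\circ g=(\bar{D}\circ\Phi_M\circ\hat{g})\circ\eta_{G_i}$. So the problem reduces to showing $\bar{D}\circ\Phi_M\circ\hat{g}=\Phi_M\circ\widehat{Dg}$ as morphisms $UG_i\to H_\tau(M)$ in $\mathcal C$.

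For the left-hand side, since $M$ is torsion-free the canonical epimorphism $p$ in (\ref{eq4.2}) is an isomorphism, so $\Phi_M(UG_i)(\hat{g})=\zeta_i(\hat{g})$. Property (\ref{P1}) gives $UG_i\in\mathcal L_{UG_i}$, and $\delta_{G_i}(UG_i)\subseteq UG_i$ trivially, so Lemma \ref{L4.1} applies with $I=J=K=UG_i$ and produces the representative
$$\bar{D}\hat{g}\;=\;D\circ\hat{g}-\hat{g}\circ\delta_{G_i}\colon UG_i\longrightarrow M.$$
Under the isomorphism $H_\tau(M)(UG_i)\cong EM_U(UG_i,H_\tau(M))$ a colimit class represented on all of $UG_i$ by a morphism $\phi:UG_i\to M$ is precisely $\Phi_M\circ\phi$, so by (\ref{eq4.8}) we obtain
$$\bar{D}\circ\Phi_M\circ\hat{g}\;=\;\Phi_M\circ\bar{D}\hat{g}\;=\;\Phi_M\circ(D\circ\hat{g}-\hat{g}\circ\delta_{G_i}).$$

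For the right-hand side I would expand $\widehat{Dg}=f_M\circ UD\circ Ug$ using the $\delta$-derivation identity (\ref{eq3.5}), rewritten as $f_M\circ UD=D\circ f_M-f_M\circ\delta_M$, and the naturality of $\delta:U\to U$ at $g$, which gives $\delta_M\circ Ug=Ug\circ\delta_{G_i}$. Substituting yields
$$\widehat{Dg}\;=\;D\circ f_M\circ Ug-f_M\circ Ug\circ\delta_{G_i}\;=\;D\circ\hat{g}-\hat{g}\circ\delta_{G_i}\;=\;\bar{D}\hat{g}$$
as morphisms $UG_i\to M$ in $EM_U$. Post-composing with $\Phi_M$ gives $\Phi_M\circ\widehat{Dg}=\bar{D}\circ\Phi_M\circ\hat{g}$, and composing with $\eta_{G_i}$ completes the verification for $g$, whence the diagram commutes by the generator property. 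The principal obstacle is the bookkeeping between the two descriptions of $\bar{D}$: as the set-level operation $f\mapsto\bar{D}f$ on $H_\tau(M)(UG_i)$ from (\ref{eq4.8}) via the colimit, versus as a $\mathcal C$-morphism acting by post-composition; once one confirms that a colimit class represented on all of $UG_i$ by $\phi$ corresponds to $\Phi_M\circ\phi$, these viewpoints agree and the rest is the direct identity $\bar{D}\hat{g}=\widehat{Dg}$ extracted above.
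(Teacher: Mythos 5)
Your proof is correct and takes essentially the same route as the paper: both arguments reduce the commutativity of (\ref{eq4.9}) to generalized elements coming from the generators and then verify, using the derivation identity (\ref{eq3.5}), the naturality of $\delta$, and the unit law for the adjunction (\ref{monadj}), that the representative $D\circ\hat g-\hat g\circ\delta_{G_i}$ of $\bar D$ applied to the class of $\hat g$ coincides with the adjoint $f_M\circ UD\circ Ug$ of $D\circ g$. The only difference is cosmetic: you parametrize by $g\in\mathcal C(G_i,M)$ and pass to $\hat g=f_M\circ Ug$, whereas the paper starts from $f\in EM_U(UG_i,M)$ and uses $\hat f=f\circ\eta_{G_i}$; these are the two directions of the same adjunction bijection, and your single use of naturality of $\delta$ at $g$ replaces the paper's two-step use at $f$ and at $\eta_{G_i}$.
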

\begin{proof}
	Let $f:UG_i\longrightarrow M$ be a morphism in $EM_U$. By (\ref{monadj}), we have a corresponding morphism $\hat{f}\in \mathcal C(G_i, M)$ given by $\hat{f}:G_i\xrightarrow{\eta_{G_i}} UG_i\xrightarrow{f} M$. Then, $D\circ \hat{f}\in \mathcal C(G_i, M)$, which by (\ref{monadj}) corresponds to a morphism in $EM_U(UG_i, M)$ given by $UG_i\xrightarrow{U\eta_{G_i}} UUG_i\xrightarrow{Uf}UM\xrightarrow{UD} UM\xrightarrow{f_{M}} M$. 
	Denote $\hat{D}(f) := f_{M}\circ UD\circ Uf\circ U\eta_{G_i}$.
Observe that to establish the commutativity of the diagram (\ref{eq4.9}), it suffices to show that the following diagram commutes.
\begin{equation}\label{eq4.10}
		\begin{CD}
		EM_{U}(UG_i, M) @>\Phi_{M}(UG_i) >> H_{\tau}(M)(UG_i) \\
		@VV \hat{D}(UG_i) V @VV \bar{D}(UG_i) V \\
		EM_{U}(UG_i, M) @>\Phi_{M}(UG_i)>> H_{\tau}(M)(UG_i)\\
	\end{CD}
	\end{equation} 
		Since $\theta_{G_i}\circ U\eta_{G_i} = 1_{UG_i}$, we have
		\begin{align*}
			\bar{D}(f)(UG_i)&= D(f(UG_i))-f(\delta_{G_i}(UG_i))\\\notag
			&= D(f(1_{UG_i}(UG_i)))-f(\delta_{G_i}(UG_i))\\\notag
				&=D(f(\theta_{G_i}(U\eta_{G_i}(UG_i))))-f(\delta_{G_i}(UG_i))\notag
			\end{align*}
			 Further, from the equalities $f\circ \theta_{G_i} = f_M\circ Uf$,  
			$\delta_{M}\circ Uf=Uf\circ \delta_{UG_i}$ and $\delta_{UG_i}\circ U\eta_{G_i} = U\eta_{G_i}\circ \delta_{G_i}$ we get 
			\begin{align*}
			\bar{D}(f)(UG_i)&=D(f_M(Uf(U\eta_{G_i}(UG_i))))-f(\delta_{G_i}(UG_i))\\\notag
				&=f_{M}(UD(Uf(U\eta_{G_i}(UG_i))))+f_{M}(\delta_{M}(Uf( U\eta_{G_i}(UG_i))))-f(\delta_{G_i}(UG_i))\\\notag
				&=f_{M}(UD(Uf(U\eta_{G_i}(UG_i))))+f_{M}(Uf(\delta_{UG_i}(U\eta_{G_i}(UG_i))))-f(\delta_{G_i}(UG_i))\\\notag
				&=f_{M}(UD(Uf(U\eta_{G_i}(UG_i))))+f_{M}(Uf(U\eta_{G_i}(\delta_{G_i}(UG_i))))-f(\delta_{G_i}(UG_i))	
				\end{align*}
Finally, as the composition $UG_i\xrightarrow{U\eta_{G_i}} UUG_i\xrightarrow{Uf} UM\xrightarrow{f_M} M$ gives back the morphism $f:UG_i\longrightarrow M$, we get $\bar{D}(f)(UG_i)=\hat{D}(f)(UG_i).$ Hence the diagram (\ref{eq4.10}) commutes. 
\end{proof}
\begin{Thm}\label{T4.3}
 Let $(U, \theta, \eta)$ be a monad on $\mathcal C$ that is exact and preserves colimits and let $\delta$ be a derivation on $U$. Let $\tau = (\mathcal T, \mathcal F)$ be a hereditary torsion theory on $EM_{U}$ and $M$ be a torsion free module in $EM_{U}$ equipped with a $\delta$-derivation $D$. Then, $\bar{D}:H_{\tau}(M)\longrightarrow  H_{\tau}(M)$ is a $\delta$-derivation on $H_{\tau}(M)$.		
\end{Thm}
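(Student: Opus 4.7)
The goal is to verify that $\bar D$ satisfies
\[
f_{H_\tau(M)}\circ(U\bar D + \delta_{H_\tau(M)}) \;=\; \bar D\circ f_{H_\tau(M)}
\]
as an identity of morphisms $UH_\tau(M)\to H_\tau(M)$ in $\mathcal C$. The plan is in two steps.

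First, I would compose both sides with $U\Phi_M\colon UM\to UH_\tau(M)$ and show that the resulting morphisms $UM\to H_\tau(M)$ coincide. For the right-hand side, since $\Phi_M$ is a morphism in $EM_U$ one has $f_{H_\tau(M)}\circ U\Phi_M=\Phi_M\circ f_M$; applying Lemma \ref{L4.2} ($\bar D\circ\Phi_M=\Phi_M\circ D$) and then the hypothesis that $D$ is a $\delta$-derivation on $M$ ($D\circ f_M = f_M\circ(UD+\delta_M)$) yields
\[
\bar D\circ f_{H_\tau(M)}\circ U\Phi_M \;=\; \Phi_M\circ D\circ f_M \;=\; \Phi_M\circ f_M\circ(UD+\delta_M).
\]
For the left-hand side, I would use functoriality of $U$ together with Lemma \ref{L4.2} to rewrite $U\bar D\circ U\Phi_M = U(\bar D\circ\Phi_M)=U\Phi_M\circ UD$, and naturality of $\delta\colon U\to U$ to rewrite $\delta_{H_\tau(M)}\circ U\Phi_M=U\Phi_M\circ\delta_M$; combining these with $f_{H_\tau(M)}\circ U\Phi_M=\Phi_M\circ f_M$ produces the same expression $\Phi_M\circ f_M\circ(UD+\delta_M)$. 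Hence the discrepancy
\[
\Psi \;:=\; \bar D\circ f_{H_\tau(M)} \;-\; f_{H_\tau(M)}\circ(U\bar D + \delta_{H_\tau(M)})\colon UH_\tau(M)\to H_\tau(M)
\]
vanishes after precomposition with $U\Phi_M$.

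Second, I would promote $\Psi\circ U\Phi_M=0$ to $\Psi=0$. Because $M$ is $\tau$-torsion-free, $\Phi_M$ is a monomorphism in $EM_U$ with $\tau$-torsion cokernel $Q$; exactness of $U$ then gives a short exact sequence $0\to UM\to UH_\tau(M)\to UQ\to 0$ in $\mathcal C$, so $\Psi$ factors uniquely as $\Psi=\widetilde\Psi\circ\pi$ for some $\widetilde\Psi\colon UQ\to H_\tau(M)$. The main obstacle is then to kill $\widetilde\Psi$. For this I would test against the generators: for any $\phi\in\mathcal C(G_j,UQ)$, the adjunction (\ref{monadj}) reinterprets $\widetilde\Psi\circ\phi\in\mathcal C(G_j,H_\tau(M))$ as an element of $H_\tau(M)(UG_j)=\varinjlim_{I\in\mathcal L_{UG_j}}EM_U(I,M)$, i.e., a germ represented by some $\xi\colon I\to M$. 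Unwinding the very construction of $\bar D$ from Lemma \ref{L4.1} together with the identity just proved on $UM$, one checks that the representative $\xi$ vanishes after restriction to a $\tau$-dense subobject $I'\subseteq I$ (here $I'$ is built from $I$ using the $\delta$-invariance established in Theorem \ref{T3.4}), so that the germ is zero in the colimit. Since $\{G_j\}$ generates $\mathcal C$, $\widetilde\Psi=0$, and therefore $\Psi=0$.

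The hardest part will be the second step: concretely matching the factorization through $UQ$ with the local formula $\bar Df(K)=D(f(K))-f(\delta_{G_i}(K))$ so that the derivation identity can be read off the already-established pointwise calculations on $UG_i$. The remainder of the argument is pure naturality, functoriality, and the input of Lemma \ref{L4.2}.
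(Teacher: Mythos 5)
Your first step coincides with the paper's own argument. The paper verifies the rightmost square of (\ref{eq4.11}) after precomposition with $U\Phi_{M}\circ Uf$ for representatives $f\colon I\longrightarrow M$ of elements of $H_{\tau}(M)(UG_i)$; since the morphisms $Uf$, $f\in EM_{U}(UG_i,M)$, $i\in\Lambda$, are jointly epimorphic onto $UM$ (the $UG_i$ generate $EM_U$ and $U$ is exact and preserves coproducts), this is exactly your identity $\Psi\circ U\Phi_{M}=0$, and you prove it with the same three ingredients the paper uses: $f_{H_{\tau}(M)}\circ U\Phi_{M}=\Phi_{M}\circ f_{M}$, naturality of $\delta$, Lemma \ref{L4.2}, and the derivation property of $D$. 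Up to this point the proposal is correct and is the paper's proof.

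The gap is in your second step, and it sits at the decisive point. The factorization $\Psi=\widetilde\Psi\circ U\pi$ with $\widetilde\Psi\colon UQ\longrightarrow H_{\tau}(M)$ is legitimate (exactness of $U$, $\Phi_M$ mono because $M$ is torsion free), but it buys nothing by itself: $\Psi$, hence $\widetilde\Psi$, is only a morphism in $\mathcal C$ — its failure to be $EM_U$-linear is precisely what is being measured — so torsion-freeness of $H_{\tau}(M)$ cannot be played against the torsion object $Q$. (In the classical case $\mathcal C=\mathrm{Ab}$, $U=R\otimes_{\mathbb Z}-$, one has $UQ=R\otimes_{\mathbb Z}Q$, and merely additive maps $R\otimes_{\mathbb Z}Q\longrightarrow H_{\tau}(M)$ from a torsion $Q$ need not vanish.) Everything therefore hangs on your sentence ``one checks that the representative $\xi$ vanishes after restriction to a $\tau$-dense subobject,'' which is asserted, not proved, and does not follow from naturality together with the identity already established on $UM$: to carry it out you must lift $\phi$ along $U\pi$ (projectivity of $G_j$), pass to the adjoint in $EM_U(UG_j,H_{\tau}(M))$ via (\ref{monadj}), pull back the image of $\Phi_{M}$ along it (using that $Coker(\Phi_{M})$ is torsion), refine by a $\delta$-invariant subobject as in Theorem \ref{T3.4}, and then redo the representative-level computation of Lemma \ref{L4.1} — which is essentially the computation the paper performs with $f\colon I\longrightarrow M$. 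In fairness, the paper itself leaves the same reduction implicit (``it is sufficient to show''), but since your proposal explicitly undertakes that reduction, the unproved claim about $\xi$ is a genuine gap: as written, your argument establishes no more than the paper's displayed computation does, and the promotion from $\Psi\circ U\Phi_M=0$ to $\Psi=0$ remains open.
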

	\begin{proof}
We begin with a morphism $f\in H_{\tau}(M)(UG_i)$. Then, $f$ represents a morphism $f:I\longrightarrow M$ for some $I\in \mathcal L_{UG_{i}}$. 
Consider the following diagram in $\mathcal C$.
\begin{equation}\label{eq4.11}
	\begin{CD}
		UI @> Uf >> UM @>U\Phi_{M} >> UH_{\tau}(M) @>U\bar{D}+\delta_{H_{\tau}(M)} >> UH_{\tau}(M) \\
		@VV f_{I} V@VV f_{M}V @VV f_{H_{\tau}(M)}V @VV f_{H_{\tau}(M)}V \\
		I@>f>>M@>\Phi_{M}>>H_{\tau}(M) @>\bar{D}>> H_{\tau}(M)\\
	\end{CD}
\end{equation} 
Note that the first two squares in diagram (\ref{eq4.11}) commutes. To show that $\bar{D}$ is a $\delta$-derivation on $H_{\tau}(M)$, it is sufficient to show that the following equality holds
\begin{equation*}
	f_{H_{\tau}(M)}\circ U\bar{D}\circ U\Phi_{M}\circ Uf + f_{H_{\tau}(M)}\circ \delta_{HM}\circ U\Phi_{M}\circ Uf= \bar{D}\circ \Phi_{M}\circ f\circ f_{I}
\end{equation*}
 Since $\delta_{H_{\tau}(M)}\circ U\Phi_{M}=U\Phi_{M}\circ \delta_{M}$, $f_{H_{\tau}(M)}\circ U\Phi_{M} = \Phi_{M}\circ f_M$ and from Lemma \ref{L4.2}, $\bar{D}\circ \Phi_{M}=\Phi_M\circ D$, we obtain 
\begin{align*}
	f_{H_{\tau}(M)}\circ U\bar{D}\circ U\Phi_{M}\circ Uf + f_{H_{\tau}(M)}\circ \delta_{HM}\circ U\Phi_{M}\circ Uf &=f_{H_{\tau}(M)}\circ U\Phi_{M}\circ UD\circ Uf+ f_{H_{\tau}(M)}\circ U\Phi_{M}\circ \delta_{M}\circ Uf\\\notag &=\Phi_{M}\circ f_{M}\circ UD\circ Uf+ \Phi_{M}\circ f_{M}\circ \delta_{M}\circ Uf\\\notag &= \Phi_{M}\circ f_{M}\circ (UD+\delta_{M})\circ Uf\\\notag &= \Phi_{M}\circ D\circ f_{M}\circ Uf\\\notag&=\bar{D}\circ \Phi_{M}\circ f_{M}\circ Uf
\end{align*}
From the commutativity of the first square in diagram (\ref{eq4.11}), we have $f_{M}\circ Uf=f\circ f_{I}$. Hence the result follows.
\end{proof}
\begin{Thm}\label{T4.4}
	The derivation $\bar{D}$ obtained in Theorem \ref{T4.3} is the unique $\delta$-derivation on $H_{\tau}(M)$ that lifts the $\delta$-derivation $D$ on $M$.
\end{Thm}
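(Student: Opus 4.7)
The plan is to prove uniqueness by comparing two lifts via their difference. Suppose $D':H_{\tau}(M)\longrightarrow H_{\tau}(M)$ is any $\delta$-derivation satisfying $D'\circ \Phi_{M} = \Phi_{M}\circ D$. Combined with Lemma \ref{L4.2}, this gives $E\circ \Phi_{M} = 0$, where $E := D' - \bar{D}$. The task then reduces to showing $E = 0$.

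The first key step is to promote $E$ from a mere morphism in $\mathcal{C}$ to a morphism in $EM_{U}$. Writing the $\delta$-derivation equation (\ref{eq3.5}) for both $D'$ and $\bar{D}$ on $H_{\tau}(M)$ and subtracting, the $\delta_{H_{\tau}(M)}$ contributions cancel by additivity of the functor $U$, leaving
\begin{equation*}
	f_{H_{\tau}(M)}\circ UE = E\circ f_{H_{\tau}(M)},
\end{equation*}
which is precisely the condition for $E$ to be a morphism in $EM_{U}$.

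Next, I would invoke the structural properties of the canonical morphism $\Phi_{M}$ recorded after (\ref{eq4.3}), namely that both $Ker(\Phi_{M})$ and $Coker(\Phi_{M})$ are $\tau$-torsion objects of $EM_{U}$. Since $M$ is $\tau$-torsion free by hypothesis, $Ker(\Phi_{M}) = 0$, so $\Phi_{M}$ is a monomorphism in $EM_{U}$. The vanishing $E\circ \Phi_{M} = 0$ then factors $E$ through $Coker(\Phi_{M})$, yielding a morphism from a $\tau$-torsion object into the $\tau$-torsion free module $H_{\tau}(M)$ (this latter fact being standard for the module of quotients, see \cite[\S 2.2]{GG}). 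By the defining property of a torsion theory, this factor must vanish, forcing $E = 0$ and hence $D' = \bar{D}$.

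The chief technical point is the upgrade of $E$ from a $\mathcal{C}$-morphism to an $EM_{U}$-morphism; once that is secured, the remainder is a clean torsion-theoretic cancellation using only the standard properties of $\Phi_{M}$ and the torsion-freeness of $H_{\tau}(M)$, both already available from the Gabriel-filter construction.
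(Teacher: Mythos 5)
Your proposal is correct and follows essentially the same route as the paper: subtract the two derivation equations so the $\delta_{H_{\tau}(M)}$ terms cancel, conclude that the difference is a morphism in $EM_{U}$, use Lemma \ref{L4.2} to see it kills $\Phi_{M}$, and then factor through the $\tau$-torsion object $Coker(\Phi_{M})$ into the $\tau$-torsion free module $H_{\tau}(M)$, forcing it to vanish. The only difference is your added observation that $\Phi_{M}$ is a monomorphism, which is harmless but not needed for the cokernel factorization.
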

\begin{proof}
Since $\bar{D}\circ \Phi_{M} = \Phi_{M}\circ D$, $\bar{D}$ lifts the $\delta$-derivation $D$ on $M$. Let $\bar{D}'$ be another $\delta$-derivation on $H_{\tau}(M)$ that lifts the $\delta$-derivation $D$ on $M$. Then,
		\begin{align*}
			(\bar{D}-\bar{D}')\circ f_{H_{\tau}(M)} = f_{H_{\tau}(M)}\circ (U\bar{D}+\delta_{H_{\tau}(M)})-f_{H_{\tau}(M)}\circ (U\bar{D}'+\delta_{H_{\tau}(M)}) = f_{H_{\tau}(M)}\circ (U\bar{D}-U\bar{D}')
				\end{align*}
			This shows that $\bar{D}-\bar{D}'$ is a morphism on $H_{\tau}(M)$ in $EM_{U}$. Further, by Lemma \ref{L4.2}, we have $(\bar{D}-\bar{D}')\circ \Phi_{M} = \Phi_{M}\circ (D-D) = 0$. Hence, there is an induced morphism  $Coker(\Phi_{M})\longrightarrow H_{\tau}(M)$ such that $(\bar{D}-\bar{D}'):H_{\tau}(M)\longrightarrow H_{\tau}(M)$ factors through it. Now, we know from \cite[Proposition 2.4 $\&$ Theorem 2.5]{GG}, that $Coker(\Phi_{M})\in \mathcal T$ and $H_{\tau}(M)\in \mathcal F$. Therefore, the morphism $Coker(\Phi_{M})\longrightarrow H_{\tau}(M)$ is zero, and hence $\bar{D} = \bar{D}'$.
	\end{proof}
	\begin{Thm}\label{T4.5}
		Let $U$ be a monad on $\mathcal C$ that is exact and preserves colimits and let $\delta$ be a derivation on $U$. Let $\tau = (\mathcal T, \mathcal F)$ be a hereditary torsion theory on $EM_{U}$ and $M$ be a module in $EM_{U}$ equipped with a $\delta$-derivation $D$. Then, there exists a unique $\delta$-derivation $\bar{D}$ on $H_{\tau}(M)$ that lifts the $\delta$-derivation $D$ on $M$.
	\end{Thm}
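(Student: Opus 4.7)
The plan is to reduce to the torsion-free case already handled in Theorems \ref{T4.3} and \ref{T4.4}, using the crucial fact provided by Theorem \ref{T3.7} that $\tau$ is differential. First I would note that Theorem \ref{T3.7} gives $D(M_{\tau})\subseteq M_{\tau}$, so the canonical epimorphism $p:M\longrightarrow M/M_{\tau}$ in $EM_{U}$ allows us to define an induced morphism $D':M/M_{\tau}\longrightarrow M/M_{\tau}$ in $\mathcal C$ by the rule $D'\circ p = p\circ D$.

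Next I would verify that $D'$ is a $\delta$-derivation on the torsion-free module $M/M_{\tau}$ in the sense of Definition \ref{D3.5}. Since $Up$ is an epimorphism (as $U$ preserves colimits) and $p\in EM_{U}$, it is enough to check the equality $f_{M/M_{\tau}}\circ (UD'+\delta_{M/M_{\tau}}) = D'\circ f_{M/M_{\tau}}$ after precomposing with $Up$; using naturality of $\delta$, the identity $f_{M/M_\tau}\circ Up=p\circ f_M$, the definition of $D'$, and the fact that $D$ is a $\delta$-derivation on $M$, this reduces to the commutativity of diagram (\ref{eq3.5}) for $D$.

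From the defining formula (\ref{eq4.1}) it is immediate that $H_{\tau}(M) = H_{\tau}(M/M_{\tau})$, and from (\ref{eq4.2}) we obtain the factorisation $\Phi_{M} = \Phi_{M/M_{\tau}}\circ p$. Applying Theorem \ref{T4.3} and Theorem \ref{T4.4} to the torsion-free module $M/M_{\tau}$ together with the $\delta$-derivation $D'$ yields a unique $\delta$-derivation $\bar D:H_{\tau}(M)\longrightarrow H_{\tau}(M)$ satisfying $\bar D\circ \Phi_{M/M_{\tau}} = \Phi_{M/M_{\tau}}\circ D'$. Composing on the right with $p$ and using the factorisation of $\Phi_{M}$ together with $D'\circ p = p\circ D$ gives $\bar D\circ \Phi_{M} = \Phi_{M}\circ D$, so $\bar D$ lifts $D$.

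For uniqueness, if $\bar D'$ is another $\delta$-derivation on $H_{\tau}(M)$ with $\bar D'\circ \Phi_{M} = \Phi_{M}\circ D$, then the same computation (using that $p$ is an epimorphism) shows $\bar D'\circ \Phi_{M/M_{\tau}} = \Phi_{M/M_{\tau}}\circ D'$, whence Theorem \ref{T4.4} forces $\bar D'=\bar D$. The main technical point is thus the verification that the quotient map $D'$ is a genuine $\delta$-derivation on $M/M_{\tau}$; once Theorem \ref{T3.7} makes $D'$ well-defined, this check is routine diagram chasing and all heavy lifting has already been done in Theorems \ref{T4.3} and \ref{T4.4}.
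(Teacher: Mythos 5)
Your proposal is correct and follows essentially the same route as the paper: use Theorem \ref{T3.7} to descend $D$ to a $\delta$-derivation on the torsion-free quotient $M/M_{\tau}$, apply Theorems \ref{T4.3} and \ref{T4.4} together with $H_{\tau}(M)=H_{\tau}(M/M_{\tau})$ and $\Phi_{M}=\Phi_{M/M_{\tau}}\circ p$, and exploit that $p$ is an epimorphism for uniqueness. Your explicit verification (via the epimorphism $Up$) that the induced map on $M/M_{\tau}$ is a $\delta$-derivation is a detail the paper leaves implicit, but the argument is the same.
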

	\begin{proof}
		We observed in Theorem \ref{T3.7} that the torsion submodule $M_{\tau}$ of $M$ is $D$-differential. Therefore, $D$ extends to a $\delta$-derivation $D$ on $M/M_{\tau}$. Since $M/M_{\tau}$ is $\tau$-torsion free, by Theorem \ref{T4.3} there exists a $\delta$-derivation $\bar{D}:H_{\tau}(M/M_{\tau}) = H_{\tau}(M)\longrightarrow H_{\tau}(M)=H_{\tau}(M/M_{\tau})$ that uniquely lifts the $\delta$-derivation $D$ on $M/M_{\tau}$, which clearly gives a lift of derivation $D$ on $M$. Further, the following diagram commutes.
			\begin{equation}\label{eq4.12}
			\begin{CD}
				M @>p >> M/M_{\tau} @>\Phi_{M/M_{\tau}} >> H_{\tau}(M/M_{\tau}) = H_{\tau}(M) \\
				@VV DV @VV DV @VV \bar{D}V \\
				M@>p>>M/M_{\tau} @>\Phi_{M/M_{\tau}}>> H_{\tau}(M/M_{\tau}) = H_{\tau}(M)\\
			\end{CD}
		\end{equation} 
		We now consider another $\delta$ derivation $\bar{D}'$ on $H_{\tau}(M)$ that also lifts the $\delta$-derivation $D$ on $M$. Then, we have the following commutative diagram.
		\begin{equation}\label{eq4.13}
			\begin{CD}
				M @> \Phi_{M} >> H_{\tau}(M)= H_{\tau}(M/M_{\tau})\\
				@VV D V@VV \bar{D}'V \\
				M @> \Phi_{M} >> H_{\tau}(M)= H_{\tau}(M/M_{\tau})\\
			\end{CD}
		\end{equation} 
	  Then, $\bar{D}'\circ \Phi_{M/M_{\tau}}\circ p =\bar{D}'\circ \Phi_{M} =\Phi_{M}\circ D = \Phi_{M/M_{\tau}}\circ p\circ D = \bar{D}\circ \Phi_{M/M_{\tau}}\circ p$. Since $p:M\longrightarrow M/M_{\tau}$ is an epimorphism in $EM_{U}$, $$\bar{D}'\circ \Phi_{M/M_{\tau}}=\bar{D}\circ \Phi_{M/M_{\tau}} = \Phi_{M/M_{\tau}}\circ D$$ This shows that $\bar{D}'$ is a lift of $D$ on $M/M_{\tau}$. The result now follows from Theorem \ref{T4.4}. 
	\end{proof}
\begin{center}
	\textbf{Declarations}
\end{center}
\smallskip
\noindent\textbf{Funding} Divya Ahuja is financially supported by the Council of Scientific $\&$ Industrial Research (CSIR), India [09/086(1430)/2019-EMR-I].

\smallskip
\noindent\textbf{Competing Interests} The authors declare no competing interests related to this research.
	\begin{bibdiv}
		\begin{biblist}
			\bib{ABKR}{article}{
				author={Ahuja, D.},
				author={Banerjee, A.},
				author={Kour, S.},
				author={Ray, S.},
				title={Eilenberg-Moore categories and quiver representations of monads and comonads},
				eprint={2307.13201v2},
				archivePrefix={arXiv},
				
			}
			\bib{AB}{article}{
				author={Banerjee, A.},
				title={On differential torsion theories and rings with several objects},
				journal={Canad. Math. Bull.},
				volume={62},
				date={2019},
				number={4},
				pages={703--714},
				issn={0008-4395},
			}
				\bib{BR}{article}{
				author={Beligiannis, A.},
				author={Reiten, I.},
				title={Homological and homotopical aspects of torsion theories},
				journal={Mem. Amer. Math. Soc.},
				volume={188},
				date={2007},
				number={883},
			}
			\bib{PB}{article}{
				author={Bland, P. E.},
				title={Differential torsion theory},
				journal={J. Pure Appl. Algebra},
				volume={204},
				date={2006},
				number={1},
				pages={1--8},
			}
			\bib{PB2}{book}{
				author={Bland, P. E.},
				title={Topics in torsion theory},
				series={Mathematical Research},
				volume={103},
				publisher={Wiley-VCH Verlag Berlin GmbH, Berlin},
				date={1998},
				pages={160},
			}
			\bib{GG}{article}{
				author={Garkusha, G. A.},
				title={Grothendieck categories},
				language={Russian},
				journal={Algebra i Analiz},
				volume={13},
				date={2001},
				number={2},
				pages={1--68},
			}
			\bib{Gol2}{article}{
				author={Golan, J. S.},
				title={Extension of derivations to modules of quotients},
				journal={Comm. Algebra},
				volume={9},
				date={1981},
				number={3},
				pages={275--281},
			}
			\bib{Gol}{book}{
				author={Golan, J. S.},
				title={Torsion theories},
				series={Pitman Monographs and Surveys in Pure and Applied Mathematics},
				volume={29},
				publisher={Longman Scientific \& Technical, Harlow; John Wiley \& Sons,
					Inc., New York},
				date={1986},
				pages={xviii+651},
	
			}
			\bib{Gro}{article}{
					author={Grothendieck, A.},
					title={Sur quelques points d’alg`ebre homologique},
					journal={Tohoku Math. J.(2)},
					volume={9},
					date={1957},
					pages={119--221},
				}	
				\bib{LB}{article}{
					author={Lomp, C.},
					author={van den Berg, J.},
					title={All hereditary torsion theories are differential},
					journal={J. Pure Appl. Algebra},
					volume={213},
					date={2009},
					number={4},
					pages={476--478},
				}
				\bib{LLV}{article}{
					author={L\'{o}pez, A. J.},
					author={L\'{o}pez L\'{o}pez, M. P.},
					author={N\'{o}voa, E. V.},
					title={Gabriel filters in Grothendieck categories},
					journal={Publ. Mat.},
					volume={36},
					date={1992},
					number={2A},
					pages={673--683 (1993)},

				}
					
				\bib{Mac}{book}{
				author={MacLane, S.},
				title={Categories for the working mathematician},
				series={Graduate Texts in Mathematics, Vol. 5},
				publisher={Springer-Verlag, New York-Berlin},
				date={1971},
				}
				\bib{MB}{article}{
					author={Mitchell, B.},
					title={Rings with several objects},
					journal={Advances in Math.},
					volume={8},
					date={1972},
					pages={1--161},
					}
			
			\bib{OK}{article}{
				author={Ohtake, K.},
				title={Colocalization and localization},
				journal={J. Pure Appl. Algebra},
				volume={11},
				date={1977/78},
				number={1-3},
				pages={217--241},

			}
			\bib{BS}{book}{
				author={Stenstr\"{o}m, Bo},
				title={Rings of quotients},
				series={Die Grundlehren der mathematischen Wissenschaften},
				volume={Band 217},
				note={An introduction to methods of ring theory},
				publisher={Springer-Verlag, New York-Heidelberg},
				date={1975},
				pages={viii+309},
			
			}
				\end{biblist}
			
		\end{bibdiv}

\end{document}